\newtheorem{thm}{Theorem}[section]
\newtheorem{cor}[thm]{Corollary}
\newtheorem{lem}[thm]{Lemma}
\theoremstyle{definition}
\newtheorem{defin}[thm]{Definition}
\newtheorem{rem}[thm]{Remark}
\numberwithin{equation}{section}
\newcommand\rn{\mathbb R^n}
\newcommand\re{\mathbb R}
\newcommand\n{\mathbb N}
\newcommand\z{\mathbb Z}
\newcommand\D{\mathbb D}
\newcommand\ph{\varphi}
\newcommand\eps{\varepsilon}
\newcommand\diam{\operatorname{diam}}
\newcommand\cH{\mathcal H}
\newcommand\rv{\overline{\mathbb R}}
\newcommand\cA{\mathcal A}
\providecommand{\ch}[1]{\text{\raise 2pt \hbox{$\chi$}\kern-0.2pt}_{#1}}
\providecommand{\vint}[1]{\mathchoice
          {\mathop{\vrule width 5pt height 3 pt depth -2.5pt
                  \kern -9pt \kern 1pt\intop}\nolimits_{\kern -5pt{#1}}}%
          {\mathop{\vrule width 5pt height 3 pt depth -2.6pt
                  \kern -6pt \intop}\nolimits_{\kern -3pt{#1}}}%
          {\mathop{\vrule width 5pt height 3 pt depth -2.6pt
                  \kern -6pt \intop}\nolimits_{\kern -3pt{#1}}}%
          {\mathop{\vrule width 5pt height 3 pt depth -2.6pt
                  \kern -6pt \intop}\nolimits_{\kern -3pt{#1}}}}
\begin{document}


\baselineskip=17pt



\title[Besov Capacity in Metric Spaces]{The Besov Capacity in Metric Spaces}

\author[J. Nuutinen]{Juho Nuutinen}
\address{Department of Mathematics and Statistics P.O. Box 35\\ University of Jyv\"askyl\"a\\
FI-40014, Finland}
\email{juho.nuutinen@jyu.fi}


\date{}

\begin{abstract}
We study a capacity theory based on a definition of Haj\l asz--Besov functions. We prove several properties of this capacity in the general setting of a metric space equipped with a doubling measure. 
The main results of the paper are lower bound and upper bound estimates for the capacity in terms of a modified Netrusov--Hausdorff content. Important tools are $\gamma$-medians, for which we also prove a new version of a Poincar\'e type inequality.
\end{abstract}

\subjclass[2010]{Primary 31E05; Secondary 31B15}

\keywords{Besov spaces, capacity, metric spaces}

\maketitle

\section{Introduction}
In this paper, we study a metric version of the Besov capacity in a metric measure space $(X,d, \mu)$ with a doubling measure $\mu$. Different capacities in the metric setting have been studied previously, for example, in \cite{BB}, \cite{B}, \cite{GT}, \cite{HaKi}, \cite{KiMa}, \cite{L} and \cite{NS}. In the Euclidean setting, the Besov capacity has been studied, for example, in \cite{A1}, \cite{A2}, \cite{AH}, \cite{AHS}, \cite{AX}, \cite{D}, \cite{HN}, \cite{MX}, \cite{Ne90}, \cite{Ne92}, \cite{Ne96} and \cite{Sto}. Our definition of the Besov capacity is based on the pointwise definition of fractional $s$-gradients and the Haj\l asz--Besov space $N^s_{p,q}(X)$, $0<s<\infty$ and $0<p,q\leq \infty$. This characterization for Besov spaces was first introduced in \cite{KYZ} and has recently been applied, for example, in \cite{GKZ}, \cite{HIT}, \cite{HKT}, \cite{HeTu} and \cite{HeTu2}. The Haj\l asz--Besov space $N^s_{p,q}(X)$ consists of $L^p$-functions $u$ that have a fractional $s$-gradient with finite mixed $l^q(L^p(X))$-norm. A sequence of nonnegative measurable functions $(g_k)_{k\in\z}$ is a fractional $s$-gradient of $u$, if it satisfies the Haj\l asz type pointwise inequality
$$|u(x)-u(y)|\le d(x,y)^s(g_k(x)+g_k(y))$$
for all $k\in\z$ and almost all $x,y\in X$ satisfying $2^{-k-1}\le d(x,y)<2^{-k}$. We give the precise definitions and notation in Section 2, where we also prove two useful lemmas for the fractional $s$-gradients. 

In Section 3, we define the Besov capacity and prove the basic properties of this capacity. These properties include monotonicity, a version of subadditivity and several convergence results. In particular, we apply the results to show that Haj\l asz--Besov functions are quasicontinuous with respect to this capacity.
Besov capacity has been studied previously in the metric setting in \cite{B} and \cite{Co}. However, in these papers only the case $p=q$ is considered in a less general metric space. Ahlfors $Q$-regularity, for example, is assumed in both papers. In a recent preprint \cite{HKT}, some of the results of this section are stated and a version of subadditivity is proved. We prove several new results and give full proofs to the basic properties of the Besov capacity not proved in \cite{HKT}.

The $\gamma$-medians are extremely useful tools in the setting of Besov spaces, especially when $0<p\leq 1$ or $0<q\leq 1$. In our proofs, they take the place of integral averages. Medians behave similarly as the integral averages, but have the advantage that the function does not need to be locally integrable. In Section 4, we study some of the basic properties of the $\gamma$-medians that we later use in our proofs. One of the main results of this section is a new Sobolev--Poincar\'e type inequality for the $\gamma$-medians. For slightly different results, see \cite{HKT} and \cite{HeTu2}. Also, we recall the definition of discrete median convolutions and use them as tools to obtain Theorem \ref{dc2}, which says that for compact sets it is equivalent to consider only the locally Lipschitz admissible functions when calculating the capacity.

In Section 5, we study a modified version of the Netrusov--Hausdorff content. The Netrusov--Hausdorff content was introduced in $\mathbb{R}^n$ by Netrusov in \cite{Ne92} and \cite{Ne96}. It has also been studied, for example, in \cite{A2} and \cite{HN}. We modify the Euclidean definition to the metric setting, since in our case the dimension of the space $X$ does not need to be constant. Instead of summing over the powers of the radii $r_j$ of the balls in the covering, we sum over the measures of the balls in the covering divided by the values $\phi(r_j)$ of an increasing function $\phi$. Our main results are lower bound and upper bound estimates for the capacity in terms of the modified Netrusov--Hausdorff content (see Theorem \ref{Cap < NH} and Theorem \ref{toinen suunta}). 

\section{Notation and preliminaries}
\subsection{Basic assumptions and notation}
We assume that the triple  $(X, d,\mu)$, denoted simply by $X$, is a metric measure space equipped with a metric $d$ and a Borel regular,
doubling outer measure $\mu$, for which the measure of every ball is positive and finite.
The doubling property means that there is a fixed constant $c_d>0$, called {\em the doubling constant}, such that
$$\mu(B(x,2r))\le c_d\mu(B(x,r))$$
for every ball $B(x,r)=\{y\in X:d(y,x)<r\}$, where $x\in X$ and $r>0$. 

We denote {\em the integral average} of a locally integrable function $u$ over a set $A$ of positive and finite measure by
\[
u_A=\vint{A}u\,d\mu=\frac{1}{\mu(A)}\int_A u\,d\mu.
\]

By $\ch{E}$ we denote the characteristic function of a set $E\subset X$ and by $\rv$ the extended real numbers $[-\infty,\infty]$.
We denote the set of all measurable, almost everywhere finite functions $u\colon X\to\rv$ by $L^0(X)$.
In general, $C$ is a positive constant whose value is not necessarily the same at each occurrence.

\subsection{Fractional $s$-gradients and Haj\l asz--Besov spaces} 
We define the Haj\l asz--Besov space in terms of pointwise inequalities, as in \cite{KYZ}. This characterization is motivated by the definition of a generalized gradient and Haj\l asz--Sobolev space $M^{s,p}(X)$, defined for $s=1$, $p\ge1$ in \cite{H} and for fractional scales in \cite{Y}. There are also other definitions for Besov spaces in the metric setting. They have been studied, for example, in \cite{GKS}, \cite{GKZ}, \cite{HMY}, \cite{KYZ}, \cite{MY}, \cite{SYY}, \cite{YZ}.

\begin{defin}\label{fract grad}
Let $0<s<\infty$.
A sequence of nonnegative measurable functions $(g_k)_{k\in\z}$ is a  {\em fractional $s$-gradient} of a function
$u\in L^0(X)$, if there exists a set $E$ with $\mu(E)=0$ such that
\begin{equation}\label{frac grad}
|u(x)-u(y)|\le d(x,y)^s(g_k(x)+g_k(y))
\end{equation}
for all $k\in\z$ and all $x,y\in X\setminus E$ satisfying $2^{-k-1}\le d(x,y)<2^{-k}$.
The collection of all fractional $s$-gradients of $u$ is denoted by $\D^s(u)$.
\end{defin}

We prove two lemmas that we use later in the proofs of this paper. The above definition implies the following lattice property for fractional $s$-gradients.

\begin{lem}\label{max lemma}
Let $0<s<\infty$, $u,v\in L^0(X)$, $(g_k)_{k\in \mathbb{Z}}\in\D^s(u)$ and $(h_k)_{k\in \mathbb{Z}}\in\D^s(v)$. Then,
the sequence $(\max\{g_k,h_k\})_{k\in \z}$ is a fractional $s$-gradient of $\max\{u,v\}$ and $\min\{u,v\}$.
\end{lem}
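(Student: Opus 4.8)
The plan is to reduce everything to two elementary pointwise inequalities for real numbers and then feed them the defining inequalities for $(g_k)$ and $(h_k)$. First I would record the key lattice estimate: for any reals $a,b,c,e$ one has
\[
|\max\{a,b\}-\max\{c,e\}|\le\max\{|a-c|,|b-e|\},
\qquad
|\min\{a,b\}-\min\{c,e\}|\le\max\{|a-c|,|b-e|\}.
\]
The max version follows by assuming without loss of generality that $\max\{a,b\}\ge\max\{c,e\}$ and that this larger value is attained by $a$; then $\max\{a,b\}-\max\{c,e\}\le a-c\le|a-c|$. The min version follows from the identity $\min\{a,b\}=-\max\{-a,-b\}$. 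I would also note the superadditivity-type bound $\max\{a+b,\,c+e\}\le\max\{a,c\}+\max\{b,e\}$, which is immediate since each of $a+b$ and $c+e$ is dominated by the right-hand side.

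Next I would set up the null set. Since $(g_k)\in\D^s(u)$ and $(h_k)\in\D^s(v)$, Definition \ref{fract grad} gives null sets $E_u$ and $E_v$ off which the respective pointwise inequalities hold; I take $E=E_u\cup E_v$, still of measure zero. For $x,y\in X\setminus E$ and $k\in\z$ with $2^{-k-1}\le d(x,y)<2^{-k}$, both
\[
|u(x)-u(y)|\le d(x,y)^s\bigl(g_k(x)+g_k(y)\bigr),
\qquad
|v(x)-v(y)|\le d(x,y)^s\bigl(h_k(x)+h_k(y)\bigr)
\]
are valid simultaneously.

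With $w=\max\{u,v\}$, I would chain the estimates: applying the first elementary inequality pointwise, then the two gradient inequalities, and finally the superadditivity bound, yields
\[
|w(x)-w(y)|
\le\max\{|u(x)-u(y)|,|v(x)-v(y)|\}
\le d(x,y)^s\bigl(\max\{g_k(x),h_k(x)\}+\max\{g_k(y),h_k(y)\}\bigr).
\]
This is exactly the statement that $(\max\{g_k,h_k\})_{k\in\z}\in\D^s(\max\{u,v\})$, using that each $\max\{g_k,h_k\}$ is nonnegative and measurable and that $\max\{u,v\}\in L^0(X)$. The case of $\min\{u,v\}$ is identical, now invoking the min version of the lattice estimate on the same null complement $X\setminus E$.

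I do not anticipate a genuine obstacle here; the content is entirely in the elementary inequalities. The only point requiring a little care is the superadditivity step $\max\{a+b,c+e\}\le\max\{a,c\}+\max\{b,e\}$, since a naive bound by $\max\{a,c\}+\max\{b,e\}$ must be justified rather than assumed, and the bookkeeping of combining the two null sets so that both gradient inequalities hold on a common set.
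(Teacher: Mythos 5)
Your proof is correct. It differs from the paper's in organization rather than in substance: the paper partitions $X\setminus(G\cup H)$ into $F_u=\{u\ge v\}$ and $F_v=\{u<v\}$ and verifies the gradient inequality case by case (with the mixed case $x\in F_u$, $y\in F_v$ split further according to whether $u(x)\ge v(y)$), then dismisses $\min\{u,v\}$ with ``the proof follows along the same lines.'' You instead factor the whole case analysis into two reusable elementary inequalities, $|\max\{a,b\}-\max\{c,e\}|\le\max\{|a-c|,|b-e|\}$ and $\max\{a+b,c+e\}\le\max\{a,c\}+\max\{b,e\}$, and then the gradient estimate for $\max\{u,v\}$ is a three-step chain with no reference to where the maximum is attained; the min case comes for free from the duality $\min\{a,b\}=-\max\{-a,-b\}$ rather than being re-proved. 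Both of your elementary inequalities are verified correctly (the WLOG argument for the first, and the trivial domination for the second, which you rightly flag as the step that must be checked rather than assumed), and your handling of the common null set $E=E_u\cup E_v$ matches the paper's $G\cup H$. What the paper's route buys is complete self-containedness at the level of the functions $u,v$ themselves; what yours buys is modularity, a uniform treatment of $\max$ and $\min$, and a cleaner display of where the only real content lies, at the cost of stating two auxiliary pointwise lemmas first.
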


\begin{proof}
We define $w=\max\{u,v\}$ and assume that $G$ and $H$ are the exeptional sets for $(g_k)_{k\in \mathbb{Z}}$ and $(h_k)_{k\in \mathbb{Z}}$ in the Definition \ref{fract grad}. Clearly, the function $w$ is measurable and $(\max\{g_k,h_k\})_{k\in \z}$ is a sequence of nonnegative measurable functions. We show that \eqref{frac grad} holds outside the set $G \cup H$ of measure zero. Let $$F_u=\{x \in X\setminus(G \, \cup \, H): u(x)\geq v(x)\}$$ and $$F_v=\{x \in X\setminus(G\cup H): u(x)<v(x)\}.$$ If $x$, $y \in F_u$ then $$|w(x)-w(y)|=|u(x)-u(y)| \leq d(x,y)^s(g_k(x)+g_k(y))$$
for all $k\in\z$ satisfying $2^{-k-1}\le d(x,y)<2^{-k}$. Similarly, for $x$, $y \in F_v$ we get $$|w(x)-w(y)| \leq d(x,y)^s(h_k(x)+h_k(y))$$ for all $k\in\z$ satisfying $2^{-k-1}\le d(x,y)<2^{-k}$.

If $x \in F_u$ and $y\in F_v$, then we can look at the two cases $u(x)\geq v(y)$ and $u(x)<v(y)$ separately. In the first case
\begin{align*}
|w(x)-w(y)|&=|u(x)-v(y)|=u(x)-v(y) \\
&\leq u(x)-u(y) \leq d(x,y)^s(g_k(x)+g_k(y))
\end{align*}
for all $k\in\z$ satisfying $2^{-k-1}\le d(x,y)<2^{-k}$. In the second case 
\begin{align*}
|w(x)-w(y)|&=v(y)-u(x)\leq v(y)-v(x) \\
&\leq d(x,y)^s(h_k(x)+h_k(y))
\end{align*}
for all $k\in\z$ satisfying $2^{-k-1}\le d(x,y)<2^{-k}$. The case $x \in F_v$ and $y\in F_u$ follows by symmetry, and hence $$|w(x)-w(y)| \leq d(x,y)^s(\max\{g_k,h_k\}(x)+\max\{g_k,h_k\}(y))$$ for all $k\in\z$ and all $x,y\in X\setminus (G \, \cup \, H)$ satisfying $2^{-k-1}\le d(x,y)<2^{-k}$. The proof for the function $\min\{u,v\}$ follows along the same lines.
\end{proof}

The next lemma is useful when we want to show that the supremum of countably many Haj\l asz--Besov functions belongs to the Haj\l asz--Besov space $N_{p,q}^s(X)$ (see Definition \ref{Besov}). 

\begin{lem}\label{sup lemma} 
Let $u_i\in L^0(X)$ and $(g_{i,k})_{k\in\z}\in \D^s(u_i)$, $i\in\n$. 
Let $u=\sup_{i\in \n} u_i$ and $(g_k)_{k\in\z}=(\sup_{i\in\n}g_{i,k})_{k\in\z}$.
If $u\in L^0(X)$, then $(g_k)_{k\in\z}\in \D^s(u)$.
\end{lem}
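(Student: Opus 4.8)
The plan is to verify Definition~\ref{fract grad} directly for $u=\sup_{i\in\n} u_i$ with the candidate gradient $(g_k)_{k\in\z}=(\sup_{i\in\n} g_{i,k})_{k\in\z}$. First I would observe that each $g_k=\sup_{i\in\n} g_{i,k}$ is a supremum of countably many nonnegative measurable functions, hence nonnegative and measurable, so $(g_k)_{k\in\z}$ is a legitimate candidate sequence. The exceptional set will be the countable union $E=\bigcup_{i\in\n} E_i$, where $E_i$ is the measure-zero set from Definition~\ref{fract grad} for $(g_{i,k})_{k\in\z}\in\D^s(u_i)$; since a countable union of null sets is null, $\mu(E)=0$. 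It then remains to establish the pointwise inequality \eqref{frac grad} for $u$ with gradient $(g_k)$ off $E$.

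For the core estimate, I would fix $x,y\in X\setminus E$ and $k\in\z$ with $2^{-k-1}\le d(x,y)<2^{-k}$, and estimate $|u(x)-u(y)|=|\sup_i u_i(x)-\sup_i u_i(y)|$. The key elementary fact is that a supremum is $1$-Lipschitz in the sup-norm sense, i.e. $|\sup_i a_i-\sup_i b_i|\le \sup_i|a_i-b_i|$ for bounded families of reals; I would want to state and use this (with a brief justification, and a remark that $u\in L^0(X)$ guarantees $u(x),u(y)$ are finite a.e. so the expressions make sense). Applying this with $a_i=u_i(x)$ and $b_i=u_i(y)$ gives
\[
|u(x)-u(y)|\le \sup_{i\in\n}|u_i(x)-u_i(y)|.
\]
Then for each individual $i$, since $x,y\notin E_i$, the defining inequality for $(g_{i,k})\in\D^s(u_i)$ yields $|u_i(x)-u_i(y)|\le d(x,y)^s(g_{i,k}(x)+g_{i,k}(y))\le d(x,y)^s(g_k(x)+g_k(y))$, where the last step uses $g_{i,k}\le g_k$ pointwise. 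Taking the supremum over $i$ on the left, the right-hand side is independent of $i$, so it passes through unchanged and \eqref{frac grad} follows.

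The main obstacle, and the point to handle carefully, is the subtlety in the inequality $|\sup_i a_i-\sup_i b_i|\le\sup_i|a_i-b_i|$ when the suprema may be infinite or the family unbounded. Since we assume $u=\sup_i u_i\in L^0(X)$, the supremum $u(x)$ is finite for a.e.\ $x$, and one should restrict attention (enlarging $E$ by another null set if necessary) to points where $u(x)$ and $u(y)$ are finite. I would note that if $\sup_i|u_i(x)-u_i(y)|=+\infty$ the inequality is vacuous, so the estimate holds trivially; otherwise the elementary Lipschitz bound applies verbatim. No further difficulty arises: the argument is uniform in $i$ because the bound $g_{i,k}\le g_k$ holds simultaneously for all $i$, which is precisely why replacing each $g_{i,k}$ by the common majorant $g_k$ lets us pull the supremum through. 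The proof is therefore short, with the only care needed being the finiteness bookkeeping for the suprema.
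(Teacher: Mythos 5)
Your proof is correct and is essentially the same as the paper's: the elementary inequality $|\sup_i a_i-\sup_i b_i|\le\sup_i|a_i-b_i|$ that you invoke is proved by exactly the $\varepsilon$-near-maximizer argument the paper writes out directly (choosing $i=i_x$ with $u(x)<u_{i}(x)+\varepsilon$ and using $u(y)\ge u_i(y)$), after which both arguments conclude via the uniform bound $g_{i,k}\le g_k$. Your finiteness bookkeeping (restricting to points where $u$ is finite, taking $E=\bigcup_i E_i$) matches the paper's use of $u\in L^0(X)$ and the union of exceptional sets.
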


\begin{proof}
Since $u \in L^0(X)$, it is finite almost everywhere. Let $x$, $y \in X\setminus E$, with $u(y)\leq u(x)<\infty$, where $E$ is the union of exceptional sets for the functions $u_i$ in the Definition \ref{fract grad}. Let $\epsilon>0$. There is $i=i_x \in \mathbb{N}$, such that $u(x)<u_i(x)+\epsilon$. Now, since $u(y)\geq u_i(y)$, we have
\begin{align*}
|u(x)-u(y)| &= u(x)-u(y) \leq u_i(x)+\epsilon-u_i(y) \\
&\leq d(x,y)^s \left(g_{i,k}(x)+g_{i,k}(y)\right) + \epsilon \\
&\leq d(x,y)^s \left(g_k(x)+g_k(y)\right) + \epsilon
\end{align*}
for all $k\in\z$ satisfying $2^{-k-1}\le d(x,y)<2^{-k}$. Letting $\epsilon \rightarrow 0$ proves the claim.
\end{proof}

For $0<p,q\le\infty$ and a sequence $(f_k)_{k\in\z}$ of measurable
functions, we define
\[
\big\|(f_k)_{k\in\z}\big\|_{l^q(L^p(X))}
=\big\|\big(\|f_k\|_{L^p(X)}\big)_{k\in\z}\big\|_{l^q},
\]
where
\[
\big\|(a_k)_{k\in\z}\big\|_{l^{q}}
=
\begin{cases}
\big(\sum_{k\in\z}|a_{k}|^{q}\big)^{1/q},&\quad\text{when }0<q<\infty, \\
\;\sup_{k\in\z}|a_{k}|,&\quad\text{when }q=\infty.
\end{cases}
\]

\begin{defin}\label{Besov}
Let $0<s<\infty$ and $0<p,q\le\infty$.
The  {\em homogeneous Haj\l asz--Besov space} $\dot N_{p,q}^s(X)$ consists of functions
$u\in L^0(X)$, for which the (semi)norm
\[
\|u\|_{\dot N_{p,q}^s(X)}=\inf_{(g_k)\in\D^s(u)}\|(g_k)\|_{l^q(L^p(X))}
\]
is finite. The {\em Haj\l asz--Besov space} $N_{p,q}^s(X)$ is $\dot N_{p,q}^s(X)\cap L^p(X)$
equipped with the norm
\[
\|u\|_{N_{p,q}^s(X)}=\|u\|_{L^p(X)}+\|u\|_{\dot N_{p,q}^s(X)}.
\]
\end{defin}
For $0<s<1$ and $0<p,q\le \infty$, the space $N^s_{p,q}(\rn)$ coincides with the classical Besov space defined via differences ($L^p$-modulus of smoothness), see \cite{GKZ}. When $0<p<1$ or $0<q<1$, the (semi)norms defined above are actually quasi-(semi)norms, but for simplicity we call them 
just norms. Recall that a quasinorm is similar to a norm in that it satisfies the norm axioms, except that there is a constant $C >1$ on the right-hand side of the triangle inequality. 

\subsection{Inequalities} 
We will often use the following elementary inequality
\begin{equation}\label{elem ie}
\sum_{i\in\z} a_i\le \Big(\sum_{i\in\z} a_i^{\beta}\Big)^{1/\beta},
\end{equation}
which holds whenever $a_i\ge 0$ for all $i$ and $0<\beta\le 1$.
H\"older's inequality for sums (when $1<b<\infty$) and \eqref{elem ie} imply the next lemma that we use later to estimate the norms of fractional gradients.

\begin{lem}\label{summing lemma}\cite[Lemma 3.1]{HIT}
Let $1<a<\infty$, $0<b<\infty$ and $c_k\ge 0$, $k\in\z$. There exists a constant $C=C(a,b)$ such that
\[
\sum_{k\in\z}\Big(\sum_{j\in\z}a^{-|j-k|}c_j\Big)^b\le C\sum_{j\in\z}c_j^b.
\]
\end{lem}

\section{Capacity}

In this section, we study a metric version of the Besov capacity. We prove the basic properties of this capacity, including several useful lemmas and convergence results. In particular, we show that Haj\l asz--Besov functions $u \in N^s_{p,q}(X)$, $0<s<1$ and $0<p, q <\infty$, are quasicontinuous with respect to the capacity (see Theorem \ref{kvasijvuus}). 
Recently, some of the results of this section have been stated or proved in \cite{HKT}. We give complete proofs to the results not proved there as well as to completely new ones. 

\begin{defin}
Let $0<s<\infty$ and $0<p,q\le \infty$. 
The {\em Besov capacity} of a set $E\subset X$ is
\[
C_{p,q}^s(E)=\inf\Big\{\|u\|_{N_{p,q}^s(X)}^p: u\in\mathcal A(E)\Big\},
\]
where 
\[
\mathcal A(E)=\{u\in N_{p,q}^s(X): u\ge 1 \text{ in a neighbourhood of } E\}
\] 
is the set of {\em admissible functions} for the capacity.
We say that a property holds \emph{$C_{p,q}^s$-quasieverywhere} if it holds outside a set of $C_{p,q}^s$-capacity zero.
\end{defin}

\begin{rem} Lemma \ref{max lemma} implies that
\[
C_{p,q}^s(E)=\inf\Big\{\|u\|_{N_{p,q}^s(X)}^p: u\in\mathcal A'(E)\Big\},
\]
where
$\mathcal A'(E)=\{u\in \cA(E): 0\le u\le 1\}$. Since $\mathcal A'(E) \subset \mathcal A(E)$, we have that $C_{p,q}^s(E) \leq \inf\left\{\|u\|_{N_{p,q}^s(X)}^p: u\in\mathcal A'(E)\right\}$. To prove the inequality to the reverse direction, let $\epsilon > 0$ and let $u \in  \mathcal A(E)$ be such that $$\|u\|_{N_{p,q}^s(X)}^p \leq C_{p,q}^s(E) + \epsilon.$$ Then $v= \max\{0, \min\{u,1\}\} \in \mathcal A'(E)$ and by Lemma \ref{max lemma} we have $\D^s(u) \subset \D^s(v)$. Now $$\inf\left\{\|w\|_{N_{p,q}^s(X)}^p: w\in\mathcal A'(E)\right\} \leq \|v\|_{N_{p,q}^s(X)}^p \leq \|u\|_{N_{p,q}^s(X)}^p \leq C_{p,q}^s(E) + \epsilon$$ and letting $\epsilon \rightarrow 0$ yields the inequality.
\end{rem}

\begin{rem}
It follows immediately that $$\mu(E) \leq C_{p,q}^s(E),$$ for every $E \subset X$. Let $u \in \mathcal{A}(E)$. Then, there is an open set $U \supset E$, such that $u \geq 1$ in $U$. Hence $$\mu(E) \leq \mu(U) \leq \|u\|_{L^p(X)}^p \leq \|u\|_{N_{p,q}^s(X)}^p$$ and taking infimum over all $u \in \mathcal{A}(E)$ proves the inequality.
\end{rem}

The $C_{p,q}^s$-capacity is generally not an outer measure. The definition clearly implies monotonicity but the capacity is not necessarily subadditive.
However, for practical purposes it is enough that the capacity
satisfies \eqref{eq: r-subadd} for some $r>0$. Even in the Euclidean setting, countable subadditivity for the Besov capacity is known only when $p\le q$ (see \cite{A1}).


\begin{thm}\label{r-subadd}\cite[Lemma 6.4]{HKT}
Let $0<s<\infty$ and $0<p,q\le \infty$. 
Then there are constants $C \ge 1$ and $0<r\le 1$ such that
\begin{equation}\label{eq: r-subadd}
C_{p,q}^s\big(\bigcup_{i\in\n}E_i\big)^r\le C \sum_{i\in\n} C_{p,q}^s(E_i)^r
\end{equation}
for all sets $E_i\subset X$, $i\in\n$.
Actually, \eqref{eq: r-subadd} holds with $r=\min\{1,q/p\}$.
\end{thm}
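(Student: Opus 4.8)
The plan is to run the standard ``supremum of admissible functions'' construction, organized so that the value $r=\min\{1,q/p\}$ emerges from a split into two regimes. First I work with the reduced formulation from the first Remark, using admissible functions satisfying $0\le u\le 1$; this is important because it forces the supremum built below to be bounded, hence to lie in $L^0(X)$, which is exactly the hypothesis needed to apply Lemma \ref{sup lemma}. We may assume $\sum_i C_{p,q}^s(E_i)^r<\infty$, as otherwise \eqref{eq: r-subadd} is trivial. Fix $\epsilon>0$ and for each $i$ choose $u_i\in\mathcal A'(E_i)$ and $(g_{i,k})_{k\in\z}\in\D^s(u_i)$ that are jointly nearly optimal, so that $\big(\|u_i\|_{L^p(X)}+\|(g_{i,k})\|_{l^q(L^p(X))}\big)^{pr}\le C_{p,q}^s(E_i)^r+\epsilon_i$ with $\sum_i\epsilon_i\le\epsilon$ (possible since $C_{p,q}^s(E_i)$ is the infimum of the $p$-th power of this quantity, and $(a+\delta)^r\le a^r+\delta^r$ when $r\le 1$). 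Set $u=\sup_i u_i$ and $g_k=\sup_i g_{i,k}$. Then $0\le u\le 1$, and since each $u_i\ge 1$ on an open set $U_i\supset E_i$, we have $u\ge 1$ on the open set $\bigcup_i U_i\supset\bigcup_i E_i$; thus $u$ is admissible for $\bigcup_i E_i$ as soon as $u\in N_{p,q}^s(X)$, and Lemma \ref{sup lemma} gives $(g_k)_{k\in\z}\in\D^s(u)$.

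The argument rests on one elementary pointwise bound. Because all functions are nonnegative and $t\mapsto t^p$ is increasing, $u^p=\sup_i u_i^p\le\sum_i u_i^p$ and $g_k^p=\sup_i g_{i,k}^p\le\sum_i g_{i,k}^p$. Integrating the first gives $\|u\|_{L^p(X)}^p\le\sum_i\|u_i\|_{L^p(X)}^p$, and integrating the second gives $\|g_k\|_{L^p(X)}^p\le\sum_i\|g_{i,k}\|_{L^p(X)}^p$ for every $k$. Writing $a_{i,k}=\|g_{i,k}\|_{L^p(X)}$ and $b_k=\|g_k\|_{L^p(X)}$, the homogeneous part reduces to controlling $\big(\sum_k b_k^q\big)^{1/q}$, subject to $b_k^p\le\sum_i a_{i,k}^p$, in terms of the quantities $\big(\sum_k a_{i,k}^q\big)^{1/q}$.

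Here the two regimes behind $r=\min\{1,q/p\}$ appear, and this is the step that demands care. When $q\ge p$, so $q/p\ge1$ and $r=1$, I view $(b_k^p)_k$ as a sequence and use monotonicity together with the genuine triangle inequality for $\|\cdot\|_{l^{q/p}}$ applied to $k\mapsto\sum_i a_{i,k}^p$, obtaining $\big(\sum_k b_k^q\big)^{p/q}=\big\|(b_k^p)_k\big\|_{l^{q/p}}\le\sum_i\big(\sum_k a_{i,k}^q\big)^{p/q}$. When $q<p$, so $q/p<1$ and $r=q/p$, the triangle inequality in $l^{q/p}$ is unavailable, and I instead apply the subadditivity \eqref{elem ie} of $t\mapsto t^{q/p}$ termwise, $b_k^q\le\big(\sum_i a_{i,k}^p\big)^{q/p}\le\sum_i a_{i,k}^q$, and sum over $k$ to get $\sum_k b_k^q\le\sum_i\sum_k a_{i,k}^q$. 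In either case, raising to the appropriate power yields $\|u\|_{\dot N_{p,q}^s(X)}^{pr}\le C\sum_i\|u_i\|_{\dot N_{p,q}^s(X)}^{pr}$, and applying the same two-regime splitting to the $L^p$ estimate (for $r<1$ invoking \eqref{elem ie} once more) gives $\|u\|_{L^p(X)}^{pr}\le C\sum_i\|u_i\|_{L^p(X)}^{pr}$. The case $q=\infty$ falls under the first regime with $\sup_k$ replacing $\sum_k$.

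Finally I combine the two pieces. Since $pr=\min\{p,q\}$, the (quasi)triangle inequality gives $\|u\|_{N_{p,q}^s(X)}^{pr}\le C\big(\|u\|_{L^p(X)}^{pr}+\|u\|_{\dot N_{p,q}^s(X)}^{pr}\big)$, while conversely $\|u_i\|_{L^p(X)}^{pr}+\|u_i\|_{\dot N_{p,q}^s(X)}^{pr}\le 2\,\|u_i\|_{N_{p,q}^s(X)}^{pr}$. Chaining these with the two estimates above yields $\|u\|_{N_{p,q}^s(X)}^{pr}\le C\sum_i\|u_i\|_{N_{p,q}^s(X)}^{pr}$; in particular the right side is finite, so $u\in N_{p,q}^s(X)$ and $u$ is genuinely admissible, whence $C_{p,q}^s\big(\bigcup_i E_i\big)\le\|u\|_{N_{p,q}^s(X)}^p$. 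Raising to the power $r$, inserting the near-optimal choices, and letting $\epsilon\to0$ gives \eqref{eq: r-subadd} with $r=\min\{1,q/p\}$. The only genuinely delicate point is the split of the homogeneous estimate into the regimes $q\ge p$ and $q<p$ together with the bookkeeping of powers, which is precisely what pins down the value of $r$; the rest is routine.
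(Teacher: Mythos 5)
Your proof is correct, and in fact the paper gives no argument of its own for this theorem (it cites \cite[Lemma 6.4]{HKT}); your construction --- suprema of near-optimal admissible functions $u_i\in\mathcal A'(E_i)$ and of their chosen gradients, with the truncation $0\le u_i\le 1$ guaranteeing $\sup_i u_i\in L^0(X)$ so that Lemma \ref{sup lemma} applies, followed by the split into the regimes $q\ge p$ (triangle inequality in $l^{q/p}$) and $q<p$ (termwise use of \eqref{elem ie}) --- is exactly the standard proof used in that reference. The exponent bookkeeping correctly yields $r=\min\{1,q/p\}$; the only case your write-up silently skips is the degenerate endpoint $p=\infty$, where the stated value of $r$ itself ceases to make sense, so nothing substantive is lost.
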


The Besov capacity is an outer capacity. This means that the capacity of a set $E \subset X$ can be obtained by approximating $E$ with open sets from the outside.

\begin{lem}\label{cap remark 2} The $C_{p,q}^s$-capacity is an outer capacity, that is,
\[
C_{p,q}^s(E)=\inf\big\{C_{p,q}^s(U): U\supset E,\ U \text{ open}\big\}.
\]
\end{lem}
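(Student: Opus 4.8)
The plan is to prove the two inequalities separately. One direction is routine: whenever $U$ is open with $U\supset E$, monotonicity of the capacity (immediate from the definition, since $\mathcal A(U)\subset\mathcal A(E)$) gives $C_{p,q}^s(E)\le C_{p,q}^s(U)$. Taking the infimum over all open $U\supset E$ yields
\[
C_{p,q}^s(E)\le \inf\big\{C_{p,q}^s(U): U\supset E,\ U \text{ open}\big\}.
\]

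The reverse inequality is the only substantive step, and here the definition of admissibility does all the work. If $C_{p,q}^s(E)=\infty$ there is nothing to prove, so I would assume it is finite and fix $\epsilon>0$. Then I would choose $u\in\mathcal A(E)$ with $\|u\|_{N_{p,q}^s(X)}^p\le C_{p,q}^s(E)+\epsilon$. By the definition of $\mathcal A(E)$, there is an open set $U$ with $E\subset U$ and $u\ge 1$ on $U$. The key observation is that this same $u$ is automatically admissible for $U$ itself: since $U$ is open it is a neighbourhood of $U$, and $u\ge 1$ there, so $u\in\mathcal A(U)$. Consequently $C_{p,q}^s(U)\le\|u\|_{N_{p,q}^s(X)}^p\le C_{p,q}^s(E)+\epsilon$, whence
\[
\inf\big\{C_{p,q}^s(V): V\supset E,\ V \text{ open}\big\}\le C_{p,q}^s(E)+\epsilon.
\]
Letting $\epsilon\to 0$ gives the remaining inequality, and combining the two completes the proof.

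I do not expect a genuine obstacle here: the outer-capacity property is essentially encoded in the definition through the phrase ``in a neighbourhood of $E$'', which forces every admissible function for $E$ to simultaneously witness the capacity of an open superset. The only points requiring minor care are dispatching the trivial case $C_{p,q}^s(E)=\infty$ and recording the (elementary) fact that an open set is a neighbourhood of itself, which is exactly what upgrades $u\in\mathcal A(E)$ to $u\in\mathcal A(U)$.
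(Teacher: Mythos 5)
Your proof is correct and follows essentially the same route as the paper's: monotonicity for one inequality, and for the other, a near-optimal admissible function $u\in\mathcal A(E)$ whose defining open neighbourhood $U$ makes $u$ itself admissible for $U$. The only (harmless) difference is that you explicitly dispatch the case $C_{p,q}^s(E)=\infty$, which the paper leaves implicit.
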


\begin{proof}
By the monotonicity, $C_{p,q}^s(E)\leq\inf\big\{C_{p,q}^s(U): U\supset E,\ U \text{ open}\big\}$. To obtain the reverse inequality, let $\epsilon >0$ and let $u \in \mathcal A(E)$ be such that $$\|u\|_{N_{p,q}^s(X)}^p \leq C_{p,q}^s(E) + \epsilon.$$ Now, since $u$ is an admissible function for the capacity, there is an open set $U$ containing $E$ such that $u \geq 1$ on $U$. Then $$C_{p,q}^s(U) \leq \|u\|_{N_{p,q}^s(X)}^p \leq C_{p,q}^s(E) + \epsilon.$$ Letting $\epsilon \rightarrow 0$ proves the claim.
\end{proof}

The following compatibility condition says that removing a set of measure zero does not change the capacity of an open set. In particular, this result can be applied to prove a uniqueness result for $C_{p,q}^s$-quasicontinuous representatives of a Haj\l asz--Besov function (see Remark \ref{yksikas}).

\begin{lem}\label{rigidity}
Let $0<s<\infty$ and $0<p,q\le \infty$. If $U$ is an open set and $\mu(E)=0$, then $$C_{p,q}^s(U)=C_{p,q}^s(U\setminus E).$$
\end{lem}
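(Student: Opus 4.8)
The plan is to prove the two inequalities separately. Since $U\setminus E\subset U$, the monotonicity of the capacity immediately yields $C_{p,q}^s(U\setminus E)\le C_{p,q}^s(U)$. The whole content of the lemma is therefore the reverse inequality $C_{p,q}^s(U)\le C_{p,q}^s(U\setminus E)$, and the guiding observation is that altering a function on the null set $E$ changes neither its $L^p$-norm nor its membership in $\D^s(\cdot)$.

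First I would fix $\epsilon>0$ and choose an admissible function $u\in\mathcal A(U\setminus E)$ with $\|u\|_{N_{p,q}^s(X)}^p\le C_{p,q}^s(U\setminus E)+\epsilon$. By admissibility there is an open set $V\supset U\setminus E$ with $u\ge 1$ on $V$. I then define a modified function $\tilde u$ by setting $\tilde u=u$ on $X\setminus E$ and $\tilde u=\max\{u,1\}$ on $E$ (any value $\ge 1$ on $E$ would serve equally well). Because $\mu(E)=0$, we have $\tilde u=u$ $\mu$-almost everywhere, so $\|\tilde u\|_{L^p(X)}=\|u\|_{L^p(X)}$. Moreover, referring to Definition \ref{fract grad}, if $(g_k)_{k\in\z}\in\D^s(u)$ with exceptional set $E_0$, then for $x,y\in X\setminus(E_0\cup E)$ one has $|\tilde u(x)-\tilde u(y)|=|u(x)-u(y)|$, so the defining inequality \eqref{frac grad} persists outside the null set $E_0\cup E$; hence $(g_k)_{k\in\z}\in\D^s(\tilde u)$ and consequently $\|\tilde u\|_{N_{p,q}^s(X)}\le\|u\|_{N_{p,q}^s(X)}$.

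Next I would verify that $\tilde u\in\mathcal A(U)$. I take $W=U\cup V$, which is open and satisfies $W\supset U$, and claim $\tilde u\ge 1$ on $W$. On $V$ this holds because $\tilde u=u\ge 1$ off $E$ and $\tilde u\ge 1$ on $E$ by construction. For a point of $U\setminus V$, the inclusion $U\setminus E\subset V$ forces any $x\in U$ with $x\notin V$ to lie in $E$, where again $\tilde u\ge 1$; thus $U\setminus V\subset E$ and $\tilde u\ge 1$ there as well. Hence $\tilde u\ge 1$ on the open neighbourhood $W$ of $U$, so $\tilde u$ is admissible for $U$ and
\[
C_{p,q}^s(U)\le\|\tilde u\|_{N_{p,q}^s(X)}^p\le\|u\|_{N_{p,q}^s(X)}^p\le C_{p,q}^s(U\setminus E)+\epsilon.
\]
Letting $\epsilon\to 0$ gives the reverse inequality and completes the proof.

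The step that calls for the most care — though it is not deep — is the neighbourhood bookkeeping: checking that $W=U\cup V$ is open and that $\tilde u\ge 1$ on all of it, which is precisely where both the openness of $U$ and the inclusion $U\setminus E\subset V$ are used. The only other point to watch is the invariance of the Besov norm under raising $u$ on the null set $E$, which holds because the $L^p$-norm and the class $\D^s(\cdot)$ are both insensitive to modifications on sets of measure zero.
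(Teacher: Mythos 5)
Your proof is correct and follows essentially the same route as the paper: both take a near-optimal admissible function for $U\setminus E$ and modify it on a $\mu$-null set to make it admissible for $U$, using that the $L^p$-norm and the class $\D^s(\cdot)$ are insensitive to changes on sets of measure zero (one simply enlarges the exceptional set in Definition \ref{fract grad}). The only cosmetic difference is that the paper first truncates to $0\le u\le 1$ so that $u=1$ on $U\setminus E$ and then sets $v=1$ on all of $U$, with $U$ itself serving as the open neighbourhood, whereas you set $\tilde u=\max\{u,1\}$ on $E$ and use the neighbourhood $U\cup V$; both devices confine the modification to a null set.
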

\begin{proof}
Clearly, by monotonicity, $C_{p,q}^s(U) \geq C_{p,q}^s(U \setminus E)$ so it remains to show the inequality to the other direction. Let $\epsilon >0$ and 
let $u \in \cA'(U\setminus E)$, with $(g_{k})_{k \in \mathbb{Z}} \in \D^s(u)$, be such that $\chi_{U\setminus E} \leq u \leq 1$ and
\[
\big(\|u\|_{L^p(X)}+\|(g_{k})\|_{l^q(L^p(X))}\big)^{p}< C_{p,q}^s(U\setminus E)+\eps.
\] 
Let $v$ be a function, such that $v=u$ in $X \setminus U$ and $v=1$ in $U$. Then $v=u$ outside the set $U \cap E$, which has measure zero, and so $\|v\|_{L^p(X)}=\|u\|_{L^p(X)}$. Also, $(g_k)_{k \in \mathbb{Z}} \in \D^s(v)$, since we can choose the exceptional set in Definition \ref{fract grad} to be the union of $U \cap E$ and the exceptional set related to $u$ and $(g_k)_{k \in \mathbb{Z}}$. Then, $v \in \cA'(U)$ and
\begin{align*} 
C_{p,q}^s(U) \leq \|v\|_{N_{p,q}^s(X)}^p &\leq (\|u\|_{L^p(X)}+\|(g_{k})\|_{l^q(L^p(X))}\big)^{p} \\ &<C_{p,q}^s(U\setminus E) + \epsilon
\end{align*}
and letting $\epsilon \rightarrow 0$ proves the claim.
\end{proof}

The outer capacity property of the Besov capacity implies the next convergence result for compact sets.

\begin{thm}
If $X \supset K_1 \supset  K_2 \supset \cdots$ are compact sets and $K=\bigcap_{i=1}^{\infty}K_i$, then $$\lim_{i \rightarrow \infty} C_{p,q}^s(K_i)= C_{p,q}^s(K).$$
\end{thm}

\begin{proof}
Clearly, by the monotonicity, $\lim_{i \rightarrow \infty} C_{p,q}^s(K_i) \geq C_{p,q}^s(K)$ and so it remains to show the inequality to the other direction. If $U$ is an open set containing $K$, then $U \cup \bigcup_{i=1}^{\infty} (X \setminus K_i)$ is an open cover of the set $K_1$ and, since $K_1$ is compact, there is a finite subcover, i.e. a positive integer $N$, such that $$K_1 \subset U \cup \bigcup_{i=1}^N (X \setminus K_i)= U \cup (X \setminus K_N).$$ It follows that $K_N \subset U$, since $K_N \subset K_1$.
Hence, 
$\lim_{i \rightarrow \infty} C_{p,q}^s(K_i) \leq C_{p,q}^s(U)$ and by Lemma \ref{cap remark 2} we obtain $$\lim_{i \rightarrow \infty} C_{p,q}^s(K_i) \leq \inf\{C_{p,q}^s(U): U \supset K,\ U \text{ open} \} = C_{p,q}^s(K).$$
\end{proof}


We apply the following theorem to show that Haj\l asz--Besov functions are quasicontinuous with respect to the $C^s_{p,q}$-capacity (see Theorem \ref{kvasijvuus}).

\begin{thm}
Let $0<s<\infty$ and $0<p,q\le \infty$. If $(u_i)_{i \in \mathbb{N}}$ is a Cauchy sequence of continuous functions in $N^s_{p,q}(X)$, then there is a subsequence of $(u_i)_{i \in \mathbb{N}}$ which converges pointwise $C_{p,q}^s$-quasieverywhere in $X$. Moreover, the convergence is uniform outside a set of arbitrary small $C_{p,q}^s$-capacity.
\end{thm}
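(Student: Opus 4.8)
The plan is to run the classical capacitary convergence argument, adapted to the fact that $C_{p,q}^s$ is only $r$-subadditive. First I would pass to a subsequence converging geometrically fast in the $N^s_{p,q}$-norm, then control the superlevel sets of consecutive differences by the capacity through a Chebyshev-type estimate, and finally invoke the $r$-subadditivity of Theorem \ref{r-subadd} to make the exceptional sets small. Continuity of the $u_i$ enters precisely here: it guarantees that these superlevel sets are open, so that suitably scaled and truncated differences are genuinely admissible functions.

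Concretely, I would relabel a subsequence $(v_j)$ of $(u_i)$, chosen so rapidly that $\|v_{j+1}-v_j\|_{N^s_{p,q}(X)}\le 2^{-j}2^{-j/p}$ for every $j$, which is possible because $(u_i)$ is Cauchy. Set $E_j=\{x\in X:|v_{j+1}(x)-v_j(x)|>2^{-j}\}$, an open set since $v_{j+1}-v_j$ is continuous. The function $\phi_j=\min\{1,\,2^j|v_{j+1}-v_j|\}$ satisfies $\phi_j\ge 1$ on the open set $E_j$, so $\phi_j\in\cA(E_j)$. If $(g_k)\in\D^s(v_{j+1}-v_j)$, then $(g_k)\in\D^s(|v_{j+1}-v_j|)$, and by Lemma \ref{max lemma}, applied to $\min\{1,\cdot\}$ together with the zero gradient of the constant $1$, the sequence $(2^jg_k)$ lies in $\D^s(\phi_j)$; combined with $\phi_j\le 2^j|v_{j+1}-v_j|$ and monotonicity and homogeneity of the $L^p$-norm this gives $\|\phi_j\|_{N^s_{p,q}(X)}\le 2^j\|v_{j+1}-v_j\|_{N^s_{p,q}(X)}$. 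Hence
\[
C_{p,q}^s(E_j)\le \|\phi_j\|_{N^s_{p,q}(X)}^p\le 2^{jp}\|v_{j+1}-v_j\|_{N^s_{p,q}(X)}^p\le 2^{-j}.
\]

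Next I would set $F_m=\bigcup_{j\ge m}E_j$ and let $r=\min\{1,q/p\}$ be the exponent from Theorem \ref{r-subadd}, which supplies a constant $C\ge1$ with
\[
C_{p,q}^s(F_m)^r\le C\sum_{j\ge m}C_{p,q}^s(E_j)^r\le C\sum_{j\ge m}2^{-jr},
\]
so $C_{p,q}^s(F_m)\to 0$ as $m\to\infty$. If $x\notin F_m$, then $|v_{j+1}(x)-v_j(x)|\le 2^{-j}$ for all $j\ge m$, whence $(v_j(x))_j$ is uniformly Cauchy on $X\setminus F_m$ and converges uniformly there. Setting $N=\bigcap_m F_m$, monotonicity gives $C_{p,q}^s(N)\le C_{p,q}^s(F_m)\to 0$, so $C_{p,q}^s(N)=0$; every $x\notin N$ lies outside some $F_m$, so $(v_j(x))_j$ converges, yielding pointwise convergence $C_{p,q}^s$-quasieverywhere. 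For the last assertion, given $\delta>0$ I choose $m$ with $C_{p,q}^s(F_m)<\delta$, and the convergence is uniform on $X\setminus F_m$.

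The one genuinely delicate point is that the Besov capacity is not subadditive, so the union $\bigcup_{j\ge m}E_j$ cannot be estimated by a plain sum of capacities; this is exactly where Theorem \ref{r-subadd} is indispensable, and it forces the subsequence to decay fast enough that $C_{p,q}^s(E_j)^r$ remains summable after absorbing the amplifying factor $2^{jp}$ produced by the Chebyshev scaling. Everything else — the openness of the $E_j$ from continuity and the gradient bound for the truncation from Lemma \ref{max lemma} — is routine.
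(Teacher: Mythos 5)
Your proposal is correct and follows essentially the same route as the paper: a Chebyshev-type capacity bound for the open superlevel sets of consecutive differences, the $r$-subadditivity of Theorem \ref{r-subadd} with $r=\min\{1,q/p\}$ to control the tail unions, and a telescoping estimate for uniform convergence off those unions. The only cosmetic differences are that you truncate via $\min\{1,2^j|v_{j+1}-v_j|\}$ (justified through Lemma \ref{max lemma}) where the paper uses $2^i|u_i-u_{i+1}|$ directly as the admissible function, and that you fix an explicit geometric decay rate for the subsequence rather than merely requiring summability of $2^{ipr}\|u_i-u_{i+1}\|^{pr}_{N^s_{p,q}(X)}$.
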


\begin{proof}
Let $r=\min\{1, q/p\}$. There is a subsequence of $(u_i)_{i \in \mathbb{N}}$, which we still denote by $(u_i)_{i \in \mathbb{N}}$, such that 
\begin{equation}\label{haha}
\sum_{i=1}^{\infty} 2^{ipr} \, \|u_i - u_{i+1} \|^{pr}_{N_{p,q}^s(X)}<\infty.
\end{equation}
For $i \in \mathbb{N}$, let $$A_i= \{x \in X: |u_i(x)- u_{i+1}(x)|>2^{-i}\}$$ and $B_j= \bigcup_{i=j}^{\infty} A_i$. Since the functions $u_i$ are continuous, the sets $A_i$ and $B_j$ are open. It follows that the function $2^i|u_i-u_{i+1}|$ is admissible for the Besov capacity of $A_i$ and $$C_{p,q}^s(A_i) \leq 2^{ip} \, \|u_i - u_{i+1} \|^p_{N_{p,q}^s(X)}.$$ Now, by Theorem \ref{r-subadd}, we get $$C_{p,q}^s(B_j) \leq C \Big(\sum_{i=j}^{\infty} C_{p,q}^s(A_i)^r\Big)^{1/r}\leq C \Big(\sum_{i=j}^{\infty} 2^{ipr} \, \|u_i - u_{i+1} \|^{pr}_{N_{p,q}^s(X)}\Big)^{1/r}.$$ Since $B_1 \supset B_2 \supset \cdots$ and sum \eqref{haha} converges, we have that $$C_{p,q}^s\Big(\bigcap_{j=1}^{\infty} B_j \Big) \leq \lim_{j\rightarrow \infty} C_{p,q}^s(B_j)=0$$ and $(u_i)_{i \in \mathbb{N}}$ converges pointwise in $X\setminus\bigcap_{j=1}^{\infty} B_j$. Moreover, $$|u_j(x) -u_k(x)| \leq \sum_{i=j}^{k-1} |u_i(x)-u_{i+1}(x)| \leq \sum_{i=j}^{k-1}2^{-i} \leq 2^{1-j}$$ for all $x \in X\setminus B_j$, for every $k>j$. Hence, the convergence is uniform in $X\setminus B_j$ and the claim follows.
\end{proof}

\begin{defin}
A function $u \colon X\to\rv$ is $C_{p,q}^s$-\emph{quasicontinuous}, if for every $\eps>0$ there exists a set $U$ such that $C_{p,q}^s(U)<\eps$ and the restriction of $u$ to $X\setminus U$ is continuous. 
\end{defin}

\noindent Note that, by Lemma \ref{cap remark 2}, the set $U$ can be chosen to be open.

\begin{thm}\label{kvasijvuus} 
Let $0<s<1$ and $0<p,q<\infty$. Then, for every $u\in N_{p,q}^s(X)$, there exists a $C_{p,q}^s$-quasicontinuous function $v$ such that $u=v$ almost everywhere.
\end{thm}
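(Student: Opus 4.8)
The plan is to approximate $u\in N_{p,q}^s(X)$ by continuous (in fact, locally Lipschitz) functions in the $N_{p,q}^s$-norm, and then invoke the convergence theorem just proved to extract a pointwise $C_{p,q}^s$-quasieverywhere limit that will serve as the quasicontinuous representative. The skeleton is standard in capacity theory: first establish density of nice functions, then use that a norm-Cauchy sequence of continuous functions converges quasieverywhere and uniformly outside sets of small capacity, and finally check that the limit agrees with $u$ almost everywhere and is quasicontinuous.

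First I would produce a sequence $(u_i)_{i\in\n}$ of continuous functions with $\|u_i-u\|_{N_{p,q}^s(X)}\to 0$. The natural tool here is the discrete median convolution machinery mentioned in the introduction and developed in Section 4 (around Theorem \ref{dc2}): median convolutions of $u$ at scale $2^{-i}$ are locally Lipschitz, hence continuous, and one expects them to converge to $u$ both in $L^p(X)$ and in the homogeneous seminorm when $0<p,q<\infty$. Passing to a subsequence, I may assume the sequence is Cauchy and converges to $u$ fast enough; in particular I would arrange that it converges pointwise $\mu$-almost everywhere to $u$, which follows from $L^p$-convergence after possibly thinning the sequence. The key point is that finiteness of $p$ and $q$ is what makes this approximation available; this is why the hypothesis $0<p,q<\infty$ (rather than the $\le\infty$ of earlier statements) appears.

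Next I would apply the preceding convergence theorem to the Cauchy sequence $(u_i)$ of continuous functions. It yields a subsequence converging pointwise $C_{p,q}^s$-quasieverywhere to some function $v$, with the convergence uniform outside open sets of arbitrarily small capacity. A uniform limit of continuous functions is continuous on each such complement, so $v$ is $C_{p,q}^s$-quasicontinuous essentially by construction: given $\eps>0$, choose the exceptional open set $U$ with $C_{p,q}^s(U)<\eps$ on which uniform convergence fails, and $v|_{X\setminus U}$ is continuous. Finally, to identify $v$ with $u$ almost everywhere, I would compare the two almost-everywhere limits: the subsequence converges to $v$ quasieverywhere, hence $\mu$-almost everywhere (since sets of capacity zero have measure zero by the second Remark, $\mu(E)\le C_{p,q}^s(E)$), while the same subsequence converges to $u$ $\mu$-almost everywhere by the $L^p$-approximation; thus $u=v$ almost everywhere.

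The main obstacle is the first step, namely constructing continuous approximants that converge in the full $N_{p,q}^s$-norm, including the homogeneous $l^q(L^p)$-part of the seminorm, in the quasi-normed range $0<p,q<\infty$. Controlling the fractional $s$-gradients of the median convolutions and showing their norms converge requires the $\gamma$-median Poincar\'e-type inequality and the summing estimate of Lemma \ref{summing lemma}; this is where the real analytic work lies, and it is the reason the paper develops the median-convolution apparatus in Section 4 before reaching this theorem. Once density of continuous functions is in hand, the remaining steps are soft consequences of the capacity estimates and the convergence theorem already established.
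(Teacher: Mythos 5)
Your proposal is correct and follows essentially the same route as the paper: density of continuous functions in $N_{p,q}^s(X)$ (the discrete median convolution result, Theorem \ref{dc1}, cited from \cite{HKT} rather than reproved), followed by the preceding convergence theorem to get a quasieverywhere pointwise limit that is continuous off open sets of small capacity. Your explicit identification of the limit with $u$ almost everywhere, via a further subsequence converging $\mu$-a.e.\ from $L^p$-convergence together with the comparison $\mu(E)\le C_{p,q}^s(E)$, is a detail the paper leaves implicit, but it is exactly the intended argument.
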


\begin{proof}
Since continuous functions are dense in $N_{p,q}^s(X)$, when $0<s<1$ and $0<p,q<\infty$, (see \cite[Theorem 1.1]{HKT}) 
and $N_{p,q}^s(X)$ is complete by the appendix in \cite{HeTu2}, the claim follows from the previous theorem. Indeed, $u \in N_{p,q}^s(X)$ if and only if there is a sequence $(u_i)_{i \in \mathbb{N}}$ of continuous functions in $L^p(X)$ and $(g_{i,k})_{k\in\z}\in \D^s(u_i-u)$, such that $u_i \rightarrow u$ in $L^p(X)$ and $\|(g_{i,k})_{k\in\z}\|_{l^q(L^p(X))} \rightarrow 0$. By the previous theorem, the limit function is $C_{p,q}^s$-quasicontinuous.
\end{proof}

\begin{rem}\label{yksikas}
The $C_{p,q}^s$-quasicontinuous representative is unique in the sense that if two $C_{p,q}^s$-quasi\-con\-ti\-nuous functions coincide almost everywhere, then they actually coincide outside a set of $C_{p,q}^s$-capacity zero. This follows from Lemmas \ref{cap remark 2} and \ref{rigidity}, and from a nice argument, in an abstract setting, in \cite{K}.
\end{rem}

\section{$\gamma$-median}

In this section, we study $\gamma$-medians that are important tools in our setting of Besov spaces. In our proofs, they take the place of integral averages and are extremely useful when $0<p\leq1$ or $0<q\leq1$. One of the main results is Theorem \ref{mm} which is a new Sobolev--Poincar\'e type inequality for the medians. Recently, slightly different results have been proved in \cite{HKT} and \cite{HeTu2}, where an additional nonempty spheres property is also assumed on the underlying space $X$. In the latter part of this section, we define a discrete median convolution which we use to show that it is equivalent to consider only the locally Lipschitz admissible functions when calculating the capacity of a compact set (see Theorem \ref{dc2}). These results are useful in Section 5, where we study a modified Netrusov--Hausdorff content related to the capacity.

Next, we define the $\gamma$-median of a function $u \in L^0(X)$ over a set of finite measure. Previously, the $\gamma$-medians have been studied, for example, in \cite{Fu}, \cite{GKZ}, \cite{JPW}, \cite{JT}, \cite{Le}, \cite{LP}, \cite{PT}, \cite{St}, \cite{Zh}, and recently in \cite{HIT}, \cite{HKT} and \cite{HeTu2}.

\begin{defin}\label{median}
Let $0<\gamma\le 1/2$. The \emph{$\gamma$-median} of a function
$u\in L^0(X)$ over a set $A$ of finite measure is
\[
m_u^\gamma(A)=\inf\big\{a\in\mathbb{R}: \mu(\{x\in A: u(x)>a\})< \gamma\mu(A)\big\}.
\]
\end{defin}

In the following lemma, we give some basic properties of the $\gamma$-median. 

\begin{lem}\label{median lemma} 
Let $A\subset X$ be a set with $\mu(A)<\infty$. 
Let $u,v\in L^0(A)$ and let $0<\gamma\le1/2$. 
The $\gamma$-median has the following properties:
\begin{itemize}
\item[a)] If $\gamma\le\gamma'$, then $m_{u}^{\gamma}(A)\ge m_{u}^{\gamma'}(A)$.
\item[b)] If $u\le v$ almost everywhere, then  $m_{u}^{\gamma}(A)\le m_{v}^{\gamma}(A)$.
\item[c)] If $A\subset B$ and $\mu(B)\le C\mu(A)$, then $m_{u}^{\gamma}(A)\le m_{u}^{\gamma/C}(B)$.
\item[d)] If $c\in\mathbb{R}$, then $m_u^\gamma(A)+c=m_{u+c}^\gamma(A)$.
\item[e)] If $c\in\mathbb{R}$, then $m_{c\,u}^\gamma(A)=c\,m_{u}^\gamma(A)$.
\item[f)] $|m_{u}^\gamma(A)|\le m_{|u|}^\gamma(A)$. 
\item[g)] For every $p>0$ and $u\in L^p(A)$, 
\[
m_{|u|}^\gamma(A)\le \Big(\gamma^{-1}\vint{A}|u|^p\,d\mu\Big)^{1/p}.
\]
\item[h)] If $u$ is continuous, then \[\lim_{r\to 0} m_{u}^\gamma(B(x,r))=u(x)\]
for every $x\in X$.
\end{itemize}
\end{lem}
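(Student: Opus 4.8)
The plan is to reduce almost every item to one elementary observation about the defining super-level sets and then isolate the two genuinely delicate points. Writing $m=m_u^\gamma(A)$, I would first record the two-sided characterization that for every $a>m$ one has $\mu(\{x\in A:u(x)>a\})<\gamma\mu(A)$, while for every $a<m$ one has $\mu(\{x\in A:u(x)>a\})\ge\gamma\mu(A)$; the first holds because some admissible threshold lies below $a$ and super-level sets shrink as the threshold grows, the second because such an $a$ fails the defining condition. Items (a)--(d) then follow by comparing admissible thresholds. For (a), enlarging $\gamma$ only relaxes the condition $\mu(\{u>a\})<\gamma\mu(A)$, so the admissible set of thresholds grows and its infimum drops. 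For (b), $u\le v$ almost everywhere gives $\{u>a\}\subseteq\{v>a\}$ up to a null set, so every threshold admissible for $v$ is admissible for $u$. For (c), if $\mu(\{x\in B:u(x)>a\})<(\gamma/C)\mu(B)$ then $\mu(\{x\in A:u(x)>a\})\le\mu(\{x\in B:u(x)>a\})<(\gamma/C)\cdot C\mu(A)=\gamma\mu(A)$, so every threshold admissible for the right-hand median is admissible for the left-hand one. For (d), the identity $\{u+c>a\}=\{u>a-c\}$ shifts the whole admissible set, hence its infimum, by $c$.

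For (e) with $c>0$ the identity $\{cu>a\}=\{u>a/c\}$ rescales the admissible set, and since $c>0$ preserves infima this gives $m_{cu}^\gamma(A)=c\,m_u^\gamma(A)$; the case $c=0$ is immediate. The case $c<0$ is the point I would flag as requiring the most care: reflection $u\mapsto-u$ turns upper super-level sets into lower ones, so for $\gamma<1/2$ it naturally involves the complementary parameter $1-\gamma$ rather than $\gamma$, and the clean homogeneity is really transparent only for $c\ge0$ (and for $c<0$ when $\gamma=1/2$). I would therefore treat the reflection as a separate step and, if full generality is needed, verify the boundary behaviour of the super-level-set measures at $m$ directly.

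I would prove (f) without appealing to the reflection in (e). The upper bound $m_u^\gamma(A)\le m_{|u|}^\gamma(A)$ is just (b) applied to $u\le|u|$. For the lower bound, set $b=m_{|u|}^\gamma(A)$ and fix any $a<-b$; choosing $a'$ with $a<a'<-b$, the relation $-a'>b$ and the characterization for $|u|$ give $\mu(\{|u|>-a'\})<\gamma\mu(A)$, hence $\mu(\{u<a'\})\le\mu(\{|u|>-a'\})<\gamma\mu(A)$, so $\mu(\{u\ge a'\})>(1-\gamma)\mu(A)\ge\mu(A)/2\ge\gamma\mu(A)$. Since $\{u\ge a'\}\subseteq\{u>a\}$ this forces $\mu(\{u>a\})\ge\gamma\mu(A)$ for every $a<-b$, whence $m_u^\gamma(A)\ge-b$. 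The hypothesis $\gamma\le1/2$ enters precisely here, through $1-\gamma\ge\gamma$, and this is the second place I expect to need care.

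Finally, (g) is a Chebyshev estimate: with $a=(\gamma^{-1}\vint{A}|u|^p\,d\mu)^{1/p}$, for every $a'>a$ one has $\mu(\{|u|>a'\})\le(a')^{-p}\int_A|u|^p\,d\mu<a^{-p}\int_A|u|^p\,d\mu=\gamma\mu(A)$, so every $a'>a$ is admissible and $m_{|u|}^\gamma(A)\le a$; the only subtlety, the strict versus nonstrict inequality in the definition, is exactly what the passage to $a'>a$ resolves. For (h), continuity at $x$ gives $u(x)-\eps\le u\le u(x)+\eps$ on $B(x,r)$ for small $r$; since the $\gamma$-median of a constant equals that constant, monotonicity (b) sandwiches $u(x)-\eps\le m_u^\gamma(B(x,r))\le u(x)+\eps$, and letting $\eps\to0$ yields the limit, using only that $0<\mu(B(x,r))<\infty$. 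Overall, the routine core is the super-level-set bookkeeping; the main obstacles are the negative-scalar case of (e) and the lower bound in (f), the latter being where $\gamma\le1/2$ is indispensable.
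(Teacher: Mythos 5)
Your proposal is correct, and it supplies in full what the paper mostly outsources: the paper's own proof of Lemma \ref{median lemma} writes out only item g), leaving a)--f) and h) to the reader with a pointer to Poelhuis--Torchinsky \cite{PT}. For g) the two arguments are essentially the same Chebyshev estimate, packaged slightly differently: the paper first observes $\gamma\mu(A)\le\mu(\{x\in A:|u(x)|\ge m_{|u|}^\gamma(A)\})$ (which needs continuity of measure from above, or a limiting argument in the threshold) and then applies Chebyshev, whereas you run Chebyshev through the admissible-threshold set and resolve the strict-versus-nonstrict issue by passing to $a'>a$; both are sound, and your route also quietly handles the degenerate case $\int_A|u|^p\,d\mu=0$ that the paper sets aside by assuming $m_{|u|}^\gamma(A)\neq0$. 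Your super-level-set bookkeeping for a)--d), the direct proof of f) that avoids reflection and uses $\gamma\le 1/2$ only through $1-\gamma\ge\gamma$, and the sandwich argument for h) are exactly the intended filling-in.

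One point of yours deserves emphasis rather than correction: your hesitation at e) for $c<0$ is fully justified, and in fact the property as stated for all $c\in\mathbb{R}$ is \emph{false} under the strict-inequality convention of Definition \ref{median}, even at $\gamma=1/2$. Reflection $u\mapsto -u$ exchanges $\gamma$ with $1-\gamma$, and on a two-point space with counting measure, $u$ taking the values $0$ and $1$ gives $m_u^{1/2}=1$ while $m_{-u}^{1/2}=0\neq-1$. So the hypothesis in e) should read $c\ge 0$; this is harmless for the paper, which invokes e) only with the positive constant $c=2^{(-m+4)s}$ in the proof of Theorem \ref{toinen suunta}, and your proof of f) is correctly structured so as not to depend on the negative-scalar case.
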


\begin{proof}
We prove the property g) below. The rest of the quite straightforward proofs are left for the reader, who can also look at \cite{PT} where most of the properties are proved in the Euclidean space. The proofs in the metric setting follow essentially the same lines.

For the proof of g), we may assume that $m_{|u|}^\gamma(A)\neq 0$, since otherwise the claim is obvious. Let $p>0$ and $u \in L^p(A)$. The definition of the $\gamma$-median clearly implies that 
\begin{align*}
\gamma\mu(A) &\leq \mu(\{x\in A: |u(x)|\geq m_{|u|}^\gamma(A)\}) \\
&= \mu(\{x\in A: |u(x)|^p\geq m_{|u|}^\gamma(A)^p\})
\end{align*}
and by Chebyshev's inequality $$\mu(\{x\in A: |u(x)|^p\geq m_{|u|}^\gamma(A)^p\}) \leq \frac{1}{m_{|u|}^\gamma(A)^p} \int_{A} |u|^p \, d\mu.$$ The claim follows by combining the above two estimates.
\end{proof}

We have the following definition, that is analogous to the definition of a Lebesgue point of a function, when taking the limit of medians.

\begin{defin}\label{gLp}
Let $u\in L^0(A)$. 
A point $x$ is a \emph{generalized Lebesgue point} of $u$, if 
\[
\lim_{r\to 0} m_{u}^\gamma(B(x,r))=u(x)
\]
for all $0<\gamma\le1/2$.
\end{defin}

\begin{rem}
Recently, it was shown in \cite[Theorem 1.2]{HKT} that every point outside of a set of $C_{p,q}^s$-capacity zero of a Haj\l asz--Besov function $u$ is a generalized Lebesgue point of $u$ and that the limit of medians gives a $C_{p,q}^s$-quasicontinuous representative of the function. The result is obtained in \cite{HKT} by defining a median maximal function and using it as a tool. In particular, a capacitary weak type estimate for the median maximal function is used in the proof.
\end{rem}

The Definition \ref{fract grad} of fractional $s$-gradients implies various Sobolev--Poincar\'e type inequalities for medians. Slightly different results than the following can be found, for example, in \cite{HKT} and \cite{HeTu2}. We obtain the next theorem even without assuming a nonempty spheres property that is assumed in \cite{HKT} and \cite{HeTu2}.

\begin{thm}\label{mm} 
Let $0<\gamma\le 1/2$, $0<s,p<\infty$ and $0<q\le\infty$. Let $u\in N^s_{p,q}(X)$. Then there is a constant $C>0$ and a sequence
$(g_k)_{k \in \mathbb{Z}}\in \D^s(u)$ such that
\begin{equation}\label{joo}
\inf_{c\in\mathbb{R}}m^\gamma_{|u-c|}(B(x,2^{-k}))\le C2^{-ks}\Big(\vint{B(x,2^{-k+1})} g_k^p\,d\mu\Big)^{1/p}
\end{equation}
for every $x\in X$ and $k\in\z$. In fact, given any $(h_j)_{j \in \mathbb{Z}} \in \D^s(u)$, we can choose
\begin{equation}\label{good grad}
g_k=\Big(\sum_{j\ge k-2}2^{(k-j)s'\tilde p}h_j^p\Big)^{1/p},
\end{equation}
where $0<s'<s$ and $\tilde p=\min\{1,p\}$. Moreover, there is a constant $c>0$ such that
\begin{equation}\label{jep2}
\|(g_k)\|_{l^q(L^p(X))} \le c \, \|(h_j)\|_{l^q(L^p(X))}.
\end{equation} 
\end{thm}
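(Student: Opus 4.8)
The plan is to fix an arbitrary $(h_j)_{j\in\z}\in\D^s(u)$, define $(g_k)$ by \eqref{good grad}, and verify the three assertions in turn: that $(g_k)\in\D^s(u)$, that the median inequality \eqref{joo} holds, and that the norm bound \eqref{jep2} is satisfied; the last is where the weights $2^{(k-j)s'\tilde p}$ and the truncation $j\ge k-2$ are used.

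Membership $(g_k)\in\D^s(u)$ is immediate: the $j=k$ term of \eqref{good grad} gives $g_k^p\ge h_k^p$, hence $g_k\ge h_k$ pointwise, so the inequality $|u(y)-u(z)|\le d(y,z)^s(h_k(y)+h_k(z))$ valid for $(h_j)$ on the scale $2^{-k-1}\le d(y,z)<2^{-k}$ holds a fortiori with $g_k$ in place of $h_k$.

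The core is \eqref{joo}, which I would establish by a single-scale good-set estimate rather than by chaining, using that \emph{every} pair $y,z$ in $B:=B(x,2^{-k})$ satisfies $d(y,z)<2^{-k+1}$ and so lies on some scale $j\ge k-1\ge k-2$. Put $\lambda=\big(\tfrac2\gamma\vint{B}g_k^p\big)^{1/p}$ and $G=\{y\in B\setminus E:g_k(y)\le\lambda\}$, where $E$ is the exceptional set for $(h_j)$; Chebyshev gives $\mu(B\setminus G)\le\lambda^{-p}\int_B g_k^p=\tfrac\gamma2\mu(B)<\gamma\mu(B)$, so $\mu(G)>0$. For $y,z\in G$ with $2^{-j-1}\le d(y,z)<2^{-j}$ the bound $h_j\le 2^{(j-k)s'\tilde p/p}g_k$, which follows from $g_k^p\ge 2^{(k-j)s'\tilde p}h_j^p$, combines with the gradient inequality to give
\[
|u(y)-u(z)|\le 2^{-js}\big(h_j(y)+h_j(z)\big)\le 2\lambda\,2^{-j(s-\theta)-k\theta},\qquad\theta:=\tfrac{s'\tilde p}{p}.
\]
Since $\tilde p/p\le 1$ we have $\theta\le s'<s$, so $s-\theta>0$ and the right-hand exponent is largest at the coarsest scale $j=k-1$, yielding $|u(y)-u(z)|\le 2^{s+1}\lambda\,2^{-ks}$ for all $y,z\in G$. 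Thus the essential oscillation of $u$ on $G$ is at most $t:=2^{s+1}\lambda\,2^{-ks}$; taking $c=m_u^{1/2}(G)$, which lies in the essential range of $u$ over $G$, we get $\{y\in B:|u(y)-c|>t\}\subset B\setminus G$ up to a null set, whence $m_{|u-c|}^\gamma(B)\le t=C2^{-ks}(\vint{B}g_k^p)^{1/p}$. Finally $\vint{B}g_k^p\le c_d\vint{2B}g_k^p$ by doubling promotes this to \eqref{joo}.

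For \eqref{jep2} I would integrate \eqref{good grad} to obtain $\|g_k\|_{L^p(X)}^p=\sum_{j\ge k-2}2^{(k-j)s'\tilde p}\|h_j\|_{L^p(X)}^p$, bound the weight by $2^{(k-j)s'\tilde p}\le 2^{4s'\tilde p}2^{-|j-k|s'\tilde p}$ for every $j\ge k-2$, and extend the sum to all $j\in\z$. Writing $c_j=\|h_j\|_{L^p(X)}$ and raising to the power $q/p$, the sum $\sum_k\|g_k\|_{L^p(X)}^q$ is dominated by a multiple of $\sum_k\big(\sum_j 2^{-|j-k|s'\tilde p}c_j^p\big)^{q/p}$, which is exactly the form controlled by Lemma \ref{summing lemma} with $a=2^{s'\tilde p}>1$ and $b=q/p$; this bounds it by $C\sum_j c_j^q=C\|(h_j)\|_{l^q(L^p(X))}^q$, and the case $q=\infty$ is handled the same way using that $\sum_j 2^{-|j-k|s'\tilde p}$ is a convergent geometric series. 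I expect the single-scale good-set estimate to be the main point, with the delicate part being the exponent bookkeeping that keeps $s-\theta>0$ and concentrates the geometric factor at the coarsest scale; the bound \eqref{jep2} is then a routine invocation of Lemma \ref{summing lemma}.
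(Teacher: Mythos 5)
Your proposal is correct, but the route to \eqref{joo} is genuinely different from the paper's. The paper gets \eqref{joo} by citing a chaining estimate, \cite[Lemma 2.1]{GKZ}, which together with property g) of Lemma \ref{median lemma} yields $\inf_{c}m^\gamma_{|u-c|}(B(x,2^{-k}))\le C2^{-ks}\sum_{j\ge k-2}2^{(k-j)s'}\bigl(\vint{B(x,2^{-k+1})}h_j^p\,d\mu\bigr)^{1/p}$, and then absorbs this sum into the single average of $g_k^p$ via H\"older's inequality when $p>1$ and via \eqref{elem ie} when $0<p\le 1$; in particular, the specific weight $2^{(k-j)s'}$, the truncation $j\ge k-2$, and the enlarged ball $B(x,2^{-k+1})$ are all inherited from that cited lemma. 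You instead prove \eqref{joo} from scratch by a single-scale good-set argument: Chebyshev at level $\lambda=\bigl(\tfrac{2}{\gamma}\vint{B}g_k^p\,d\mu\bigr)^{1/p}$ produces $G\subset B$ with $\mu(B\setminus G)\le\tfrac{\gamma}{2}\mu(B)$, every pair of distinct points of $B=B(x,2^{-k})$ lies at a scale $j\ge k-1$, where the definition \eqref{good grad} gives $h_j\le 2^{(j-k)\theta}g_k$ with $\theta=s'\tilde p/p\le s'<s$, so the oscillation of $u$ on $G$ is at most $C\lambda 2^{-ks}$ (the geometric factor concentrating at $j=k-1$ precisely because $s-\theta>0$), and choosing $c=m^{1/2}_u(G)$ forces $\{|u-c|>t\}\cap B\subset B\setminus G$, hence $m^\gamma_{|u-c|}(B)\le t$; the degenerate cases $\vint{B}g_k^p\,d\mu\in\{0,\infty\}$ are trivial. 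This buys self-containedness (no appeal to \cite{GKZ}), works for every $0<s'<s$ rather than the particular $s'$ the citation provides, and in fact proves the stronger inequality with the average over $B(x,2^{-k})$ itself, the ball $B(x,2^{-k+1})$ in \eqref{joo} then coming for free from doubling; the paper's route, by contrast, is shorter given the reference and explains where the exact shape of \eqref{good grad} originates. Your remaining steps coincide with the paper's: membership $(g_k)\in\D^s(u)$ via the $j=k$ term giving $g_k\ge h_k$, and \eqref{jep2} via $\|g_k\|_{L^p(X)}^p=\sum_{j\ge k-2}2^{(k-j)s'\tilde p}\|h_j\|_{L^p(X)}^p$ followed by Lemma \ref{summing lemma} with $a=2^{s'\tilde p}$ and $b=q/p$ (you even treat $q=\infty$ explicitly, which the paper's displayed computation leaves implicit).
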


\begin{proof}
Let $(h_j)_{j \in \mathbb{Z}}\in\D^s(u)$. By \cite[Lemma 2.1]{GKZ} and g) of Lemma \ref{median lemma}, there exist constants $C>0$ and $0<s'<s$, such that
\[
\inf_{c\in\mathbb{R}}m^\gamma_{|u-c|}(B(x,2^{-k}))\le C2^{-ks}\sum_{j\ge k-2}2^{(k-j)s'}\Big(\vint{B(x,2^{-k+1})} h_j^p\,d\mu\Big)^{1/p}
\]
for every $x\in X$ and $k\in\z$. We show that the right-hand side is bounded by $C2^{-ks}\left(\vint{B(x,2^{-k+1})} g_k^p\,d\mu\right)^{1/p}$,
where $$g_k=\Big(\sum_{j\ge k-2}2^{(k-j)s'\tilde p}h_j^p\Big)^{1/p}$$
and $\tilde p=\min\{1,p\}$. Notice that $(g_k)_{k \in \mathbb{Z}} \in \D^s(u)$, since 
$$|u(x)-u(y)| \leq d(x,y)^s(h_k(x)+h_k(y))\leq d(x,y)^s(g_k(x)+g_k(y))$$
for all $k \in \mathbb{Z}$ and all $x$, $y \in X \setminus E$ satisfying $2^{-k-1}\leq d(x,y)<2^{-k}$.

If $p>1$, we use H\"older's inequality for sums ($1/p+1/p'=1$)
\begin{align*}
&\sum_{j\ge k-2}2^{(k-j)s'\frac{1}{p'}} \, 2^{(k-j)s'\frac{1}{p}}\Big(\vint{B(x,2^{-k+1})} h_j^p\,d\mu\Big)^{1/p} \\ &\leq \Big(\sum_{j\ge k-2} 2^{(k-j)s'}\Big)^{1/p'} \cdot \Big(\sum_{j \ge k-2} 2^{(k-j)s'}\vint{B(x,2^{-k+1})} h_j^p\,d\mu \Big)^{1/p}  \\ &\leq C \, \Big(\vint{B(x,2^{-k+1})} \sum_{j \ge k-2} 2^{(k-j)s'} h_j^p\,d\mu \Big)^{1/p}
\end{align*}
and if $0<p\leq1$, by inequality \eqref{elem ie},
\begin{align*}
\sum_{j\ge k-2}2^{(k-j)s'}\Big(\vint{B(x,2^{-k+1})} h_j^p\,d\mu\Big)^{1/p} &\leq \Big(\sum_{j\ge k-2}2^{(k-j)s'p}\vint{B(x,2^{-k+1})} h_j^p\,d\mu\Big)^{1/p} \\
&=\Big(\vint{B(x,2^{-k+1})} \sum_{j \ge k-2} 2^{(k-j)s'p} h_j^p\,d\mu \Big)^{1/p}.
\end{align*}
Combining the two cases, we obtain inequality \eqref{joo}. To prove inequality \eqref{jep2}, we first see that
\begin{align*}
\|g_k\|_{L^p(X)}^p &= \int_X \sum_{j \ge k-2} 2^{(k-j)s'\tilde p} h_j^p\,d\mu = \sum_{j \ge k-2} 2^{(k-j)s'\tilde p} \|h_j\|_{L^p(X)}^p.
\end{align*}
Now, by Lemma \ref{summing lemma}, we get
\begin{align*}
\sum_{k \in \mathbb{Z}} \|g_k\|_{L^p(X)}^q &\leq \sum_{k \in \mathbb{Z}} \Big(\sum_{j \ge k-2} 2^{(k-j)s'\tilde p} \|h_j\|_{L^p(X)}^p\Big)^{q/p} \\
&\leq C \sum_{j \in \mathbb{Z}} \|h_j\|_{L^p(X)}^q
\end{align*}
and it follows that
\[
\|(g_k)\|_{l^q(L^p(X))} \le C \|(h_j)\|_{l^q(L^p(X))}.
\] 
\end{proof}

\begin{rem}\label{mm2}
Let $A\subset X$ be a set with $\mu(A)<\infty$, $u\in L^0(A)$ and $0<\gamma\le1/2$. Then
$$m_{|u-m^{\gamma}(A)|}^{\gamma}(A) \leq 2 \inf_{c \in \mathbb{R}} m_{|u-c|}^{\gamma}(A),$$ since for all $c \in \mathbb{R}$
\begin{align*}
m_{|u-m^{\gamma}(A)|}^{\gamma}(A) &\leq m_{|u- c| + |c-m^{\gamma}(A)|}^{\gamma}(A) = m_{|u-c|}^{\gamma}(A) + |c - m_u^{\gamma}(A)| \\
&\leq m_{|u-c|}^{\gamma}(A) + m_{|u-c|}^{\gamma}(A),
\end{align*}
where we used properties b), d) and f) of $\gamma$-median from Lemma \ref{median lemma}.
\end{rem}

Next, we define a discrete $\gamma$-median convolution that we use in the proof of Theorem \ref{dc2}. Discrete convolutions are standard tools in analysis on metric measure spaces (see, for example, \cite{CW} and \cite{MS}) and they are used, for example, to define a discrete maximal function, introduced in \cite{KL}. 
Analogously, a discrete $\gamma$-median maximal function can be defined by taking a supremum of the discrete $\gamma$-median convolutions (see, for example, \cite{HKT}).

We fix a scale $r>0$ and cover the space $X$ with a countable family of balls $\{B_i\}=\{B(x_i,r)\}$, so that the enlarged balls are of bounded overlap. This means that there is a constant $C(c_d)>0$, depending only on the doubling constant, such that
\[
\sum_{i=1}^{\infty}\ch{2B_i}(x)\le C(c_d)<\infty
\]
for all $x \in X$. Then, a partition of unity related to the covering $\{B_i\}$ is constructed. There exist $C/r$-Lipschitz functions $\ph_i$, $i = 1,2,\ldots$, such that $0\le\ph_i\le 1$, 
$\ph_i=0$ outside $2B_i$ and $\ph_i\ge C^{-1}$ on $B_i$ for all $i$ and $\sum_{i=1}^{\infty}\ph_i=1$. 
Let $0<\gamma\le1/2$. 
A \emph{discrete $\gamma$-median convolution} of a function $u\in L^0(X)$ at scale $r>0$ is
\[
u_r^\gamma (x)=\sum_{i=1}^{\infty}m_{u}^\gamma(B_i)\ph_i (x),
\]
for all $x \in X$, where the balls $B_i$ and functions $\ph_i$ are as above.

We apply the next theorem, by which locally Lipschitz functions are dense in $N^s_{p,q}(X)$, to show that for compact sets we can restrict the set of admissible functions in the definition of the $C^s_{p,q}$-capacity to locally Lipschitz functions when $0<s<1$ and $0< p, q < \infty$.

\begin{thm}\label{dc1}{\cite[Theorem 1.1]{HKT}}\label{density of loc lip} 
Let $0<\gamma\le 1/2$, $0<s<1$, $0<p,q<\infty$ and $u\in \dot N^s_{p,q}(X)$. 
Then, the discrete $\gamma$-median convolution approximations 
$u_{2^{-i}}^\gamma$ converge to $u$ in $N^s_{p,q}(X)$ as $i\to\infty$. 
\end{thm}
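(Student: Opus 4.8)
The plan is to verify the two pieces of convergence separately: both the $L^p$-norm $\|u_{2^{-i}}^\gamma-u\|_{L^p(X)}$ and the homogeneous seminorm $\|u_{2^{-i}}^\gamma-u\|_{\dot N^s_{p,q}(X)}$ must tend to $0$. Throughout I fix the scale $r=2^{-i}$, write $v=u_r^\gamma$, and fix a gradient $(g_k)_{k\in\z}\in\D^s(u)$ of the special form \eqref{good grad} furnished by Theorem \ref{mm}. The key quantitative input is $\|(g_k)\|_{l^q(L^p(X))}<\infty$, so that (since $q<\infty$) both the tail $\sum_{k\ge i}\|g_k\|_{L^p(X)}^q$ and the single term $\|g_i\|_{L^p(X)}$ vanish as $i\to\infty$.

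For the $L^p$ estimate, I would first prove the almost-everywhere pointwise bound
\[
|v(x)-u(x)|\le C\,2^{-is}\Big(g_i(x)+\Big(\vint{CB_x}g_i^p\,d\mu\Big)^{1/p}\Big),
\]
where $B_x$ is a covering ball containing $x$. The mechanism is that $\sum_j\ph_j=1$ forces $v(x)-u(x)=\sum_{j}(m_u^\gamma(B_j)-u(x))\ph_j(x)$, a sum of at most $C(c_d)$ nonzero terms by bounded overlap, each supported where $B_j\subset CB_x$. To estimate a single term $m_u^\gamma(B_j)-u(x)$ I replace the median by genuine values of $u$: since both $\{z\in B_j:u(z)\ge m_u^\gamma(B_j)\}$ and $\{z\in B_j:u(z)\le m_u^\gamma(B_j)\}$ have measure at least $\gamma\mu(B_j)$, I pick $z$ on the appropriate side and use $|u(x)-u(z)|\le d(x,z)^s(g_k(x)+g_k(z))$ with $d(x,z)\lesssim r$. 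Integrating the $p$-th power, summing over the covering, and using that averaging against balls of the \emph{fixed} radius $r$ is bounded on $L^1$ applied to $g_i^p$ (so that no Hardy--Littlewood maximal operator, which would fail for $p\le1$, is invoked) yields $\|v-u\|_{L^p(X)}\le C2^{-is}\|g_i\|_{L^p(X)}\to0$.

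For the seminorm I would construct a gradient $(\tilde g_k)_{k\in\z}\in\D^s(v-u)$ by splitting scales at $k=i$. At small scales ($k\ge i$, i.e.\ $d(x,y)<r$) the convolution $v$ is locally Lipschitz with constant $\lesssim r^{-1}\cdot r^s(\vint{CB_x}g_i^p\,d\mu)^{1/p}$, the factor $r^{s-1}$ coming from the $C/r$-Lipschitz partition of unity and the median-Poincar\'e bound of Theorem \ref{mm} applied to neighbouring medians; writing $|v(x)-v(y)|\le C\,d(x,y)^s\,(d(x,y)/r)^{1-s}(\vint{CB_x}g_i^p\,d\mu)^{1/p}$ and adding the genuine gradient $g_k$ of $u$ gives a small-scale gradient whose $l^q(L^p)$-contribution sums, the geometric factor $2^{-(k-i)(1-s)q}$ converging because $s<1$. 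At large scales ($k<i$, i.e.\ $d(x,y)\ge r$) I use $|(v-u)(x)-(v-u)(y)|\le|v(x)-u(x)|+|v(y)-u(y)|$ together with the pointwise $L^p$-bound above, taking $\tilde g_k=C2^{ks}|v-u|$, so that $\|\tilde g_k\|_{L^p(X)}\le C2^{(k-i)s}\|g_i\|_{L^p(X)}$ and the factor $2^{(k-i)sq}$ sums over $k<i$. Reassembling via Lemma \ref{summing lemma} and bounded overlap, the seminorm of $v-u$ is bounded by $C(\|g_i\|_{L^p(X)}^q+\sum_{k\ge i}\|g_k\|_{L^p(X)}^q)^{1/q}\to0$.

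The main obstacle is the $p\le1$ (and $q\le1$) regime, where every step must be run with medians in place of integral averages. The delicate point is the passage from the median oscillation controlled by Theorem \ref{mm} to a genuine pointwise and $L^p$ bound; I expect to handle it precisely by the ``representative value'' argument above (choosing $z$ in the positive-measure level sets of the median) together with the observation that only fixed-scale averaging, bounded on $L^1$, ever appears, so the failure of the maximal operator on $L^p$ for $p\le1$ is circumvented. Keeping the two scale-regimes summable simultaneously is exactly what forces the use of $s<1$ at small scales and of Lemma \ref{summing lemma} to reorganize the double sums without destroying $q$-summability.
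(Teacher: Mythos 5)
The paper itself contains no proof of this statement: Theorem \ref{dc1} is quoted verbatim from \cite[Theorem 1.1]{HKT}, so there is no internal argument to compare yours against. What you have written is, in substance, a correct reconstruction of the standard proof of this density result: an $L^p$ estimate for $u^\gamma_r-u$ at the convolution scale $r=2^{-i}$, then a fractional $s$-gradient for the difference built by splitting scales at $k=i$, with the decay factor $2^{-(k-i)(1-s)}$ at small scales coming from the $C/r$-Lipschitz partition of unity together with the median Poincar\'e inequality (this is where $s<1$ enters) and the factor $2^{(k-i)s}$ at large scales coming from the $L^p$ bound; everything is then reassembled using the summability of $\|(g_k)\|_{l^q(L^p(X))}$ with $q<\infty$, so that both $\|g_i\|_{L^p(X)}$ and the tail $\sum_{k\ge i}\|g_k\|_{L^p(X)}^q$ vanish. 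Your handling of $p\le 1$, $q\le 1$ — medians instead of averages, inequality \eqref{elem ie} instead of H\"older, and only fixed-scale averaging (bounded on $L^1$ by Fubini and doubling) instead of a maximal operator — is exactly right.

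One step should be made explicit, because as written it conceals the only genuine difficulty. When you represent $m_u^\gamma(B_j)$ by a value $u(z)$ with $z$ chosen (via Chebyshev) in a level set of measure at least $\gamma\mu(B_j)$, you control $z$ only up to $d(x,z)\le Cr$; the distance may be much smaller than $r$, so the pointwise inequality fires at a scale $k$ possibly much larger than $i$ and produces $g_k$, not the $g_i$ appearing in your claimed bound. You cannot in general relocate $z$ to distance comparable to $r$ — that is precisely the ``nonempty spheres'' issue the paper flags in connection with \cite{HKT} and \cite{HeTu2}. The repair is already latent in your decision to fix the gradient of the special form \eqref{good grad}: that sequence is quasi-monotone, $g_k\le 2^{(k-i)s'}g_i$ for $k\ge i$ (compare the defining sums term by term), whence $d(x,z)^s\big(g_k(x)+g_k(z)\big)\le 2^{-ks+(k-i)s'}\big(g_i(x)+g_i(z)\big)\le C2^{-is}\big(g_i(x)+g_i(z)\big)$ since $s'<s$; alternatively one can telescope medians over shrinking balls and invoke \eqref{joo} directly, at the cost of a summable series over scales $l\ge i$ in the $L^p$ estimate. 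Spell this out and the sketch closes. Two further remarks: Theorem \ref{mm} is stated for $u\in N^s_{p,q}(X)$, but its proof uses only $u\in L^0(X)$ and $(h_j)\in\D^s(u)$, so your application to $u\in\dot N^s_{p,q}(X)$ is legitimate; and since Theorem \ref{mm} is proved in this paper without the nonempty spheres hypothesis, your route has the pleasant by-product of making Theorem \ref{dc1} self-contained in the paper's full generality rather than imported from \cite{HKT}.
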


\begin{thm}\label{dc2} Let $0<s<1$, $0<p,q<\infty$ and let $K\subset X$ be a compact set. Then
\[
C_{p,q}^s(K)\approx\inf\{\|u\|_{N^s_{p,q}(X)}^p: u\in\tilde\cA(K)\},
\]
where $\tilde\cA(K)=\{u\in\cA(K): u \text{ is locally Lipschitz}\}$.
\end{thm}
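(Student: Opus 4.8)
The plan is to prove the two inequalities that together give the comparability $\approx$. One direction is immediate: since $\tilde\cA(K)\subset\cA(K)$, the definition of the capacity yields $C_{p,q}^s(K)\le\inf\{\|u\|_{N^s_{p,q}(X)}^p:u\in\tilde\cA(K)\}$. The content of the theorem is the reverse estimate $\inf\{\|u\|_{N^s_{p,q}(X)}^p:u\in\tilde\cA(K)\}\le C\,C_{p,q}^s(K)$, which I would obtain by approximating a near-optimal admissible function by its discrete $\gamma$-median convolutions and checking that these approximants stay admissible.

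Fix $\eps>0$ and, using the Remark following the definition of the capacity, choose $u\in\cA'(K)$ with $0\le u\le1$ and $\|u\|_{N^s_{p,q}(X)}^p<C_{p,q}^s(K)+\eps$. By definition of $\cA'(K)$ there is an open set $U\supset K$ with $u\ge1$ on $U$. For each $i\in\n$ form the discrete $\gamma$-median convolution $u_{2^{-i}}^\gamma=\sum_j m_u^\gamma(B_j)\ph_j$ at scale $r=2^{-i}$, where the balls $B_j=B(x_j,r)$ and the Lipschitz functions $\ph_j$ are as in the construction. Being a locally finite (by the bounded overlap of the balls $2B_j$) sum of Lipschitz functions with constant coefficients, each $u_{2^{-i}}^\gamma$ is locally Lipschitz; and by Theorem \ref{dc1} (applicable since $u\in N^s_{p,q}(X)\subset\dot N^s_{p,q}(X)$) we have $u_{2^{-i}}^\gamma\to u$ in $N^s_{p,q}(X)$.

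The main step is to show that $u_{2^{-i}}^\gamma$ is admissible for $K$ once $i$ is large enough, and here the compactness of $K$ is essential. Since $K$ is compact and $U$ open, there is $\delta>0$ with $\{x:\operatorname{dist}(x,K)<4\delta\}\subset U$. Take $i$ so large that $r=2^{-i}<\delta$. If $\operatorname{dist}(x,K)<\delta$ and $\ph_j(x)>0$, then $x\in 2B_j$, so every point $y\in B_j$ satisfies $d(y,x)<3r<3\delta$ and hence $\operatorname{dist}(y,K)<4\delta$; thus $B_j\subset U$ and $u\ge1$ on $B_j$. By property b) of Lemma \ref{median lemma}, comparing $u$ with the constant function $1$, this forces $m_u^\gamma(B_j)\ge m_1^\gamma(B_j)=1$. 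Since $\ph_j\ge0$ and $\sum_j\ph_j=1$, it follows that $u_{2^{-i}}^\gamma(x)=\sum_j m_u^\gamma(B_j)\ph_j(x)\ge\sum_j\ph_j(x)=1$ for all $x$ in the open neighbourhood $\{x:\operatorname{dist}(x,K)<\delta\}$ of $K$, so $u_{2^{-i}}^\gamma\in\tilde\cA(K)$.

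Finally I would combine the two facts. By the (quasi)triangle inequality, $\|u_{2^{-i}}^\gamma\|_{N^s_{p,q}(X)}\le C(\|u_{2^{-i}}^\gamma-u\|_{N^s_{p,q}(X)}+\|u\|_{N^s_{p,q}(X)})$, and the first term tends to $0$, so for $i$ large $\|u_{2^{-i}}^\gamma\|_{N^s_{p,q}(X)}^p\le C^p(C_{p,q}^s(K)+\eps)$. Taking the infimum over $\tilde\cA(K)$ and letting $\eps\to0$ gives $\inf\{\|u\|_{N^s_{p,q}(X)}^p:u\in\tilde\cA(K)\}\le C^p\,C_{p,q}^s(K)$, which together with the easy direction proves the claim. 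I expect the admissibility verification to be the main obstacle, since it is exactly where compactness of $K$ and the monotonicity of the $\gamma$-median are used to guarantee that convolution does not destroy the constraint $u\ge1$ near $K$; the multiplicative constant $C$, which is present only when $p<1$ or $q<1$ (from the quasinorm), is why the statement asserts $\approx$ rather than equality.
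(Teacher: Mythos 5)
Your proof is correct and follows essentially the same route as the paper: both arguments take a near-optimal admissible function, show via compactness of $K$ (a positive distance from $K$ to $X\setminus U$) and the monotonicity of the $\gamma$-median that the discrete median convolutions $u^\gamma_{2^{-i}}$ remain $\ge 1$ on a neighbourhood of $K$ for small scales, and then conclude with the density Theorem \ref{dc1} and the quasi-triangle inequality. Your only deviations are cosmetic: you normalize to $u\in\cA'(K)$ with $0\le u\le 1$ (harmless but not needed, since only the balls meeting the $\delta$-neighbourhood contribute) and you spell out the geometric inclusion with explicit constants that the paper states more tersely.
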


\begin{proof} Since $\tilde\cA(K)\subset \cA(K)$, it suffices to prove the ''$\ge$'' part.
Let $u\in \cA(K)$. Then there is an open set $U\supset K$ such that $u\ge 1$ in $U$. 
Let $V=\{x: d(x,K)<d(K,X\setminus U)/2\}$.
If $x\in V$ and $r<d(K,X\setminus U)/8$, then $B(y,2r)\subset U$ whenever $x\in B(y,2r)$.
It follows that $u^\gamma_r\ge 1$ in $V$ when $r<d(K,X\setminus U)/8$. Thus, $u^\gamma_r\in \tilde\cA(K) $, for small $r$, and so, by Theorem \ref{density of loc lip},
\[
\begin{split}
\inf\{\|v\|_{N^s_{p,q}(X)}^p: v\in\tilde\cA(K)\}&\le \liminf_{i\to\infty} \|u^\gamma_{2^{-i}}\|_{N^s_{p,q}(X)}^p\\
&\le \liminf_{i\to\infty} C(\|u\|_{N^s_{p,q}(X)}^p+\|u^\gamma_{2^{-i}}-u\|_{N^s_{p,q}(X)}^p)\\
&\le C\|u\|_{N^s_{p,q}(X)}^p.
\end{split}
\] 
The claim follows by taking infimum over $u\in \cA(K)$.
\end{proof}

\section{Netrusov--Hausdorff content}

In this section, we define a modified version of the Netrusov--Hausdorff content and prove lower bound and upper bound estimates for the Besov capacity in terms of this Netrusov--Hausdorff cocontent. 
The Netrusov--Hausdorff content was first used by Netrusov in \cite{Ne92} and \cite{Ne96} when studying the relations between capacities and Hausdorff contents in $\mathbb{R}^n$. 
We modify this content by taking the sum over the measures of the balls in the covering divided by the values $\phi(r_j)$ of the radii, where $\phi$ is an increasing function. In the setting of a doubling metric measure space, this kind of modification, instead of summing the powers of the radii of the balls in the covering, is natural since the dimension of the space is usually not (even locally) constant. 

\begin{defin}\label{HN content}
Let $\phi:(0,\infty)\to (0,\infty)$ be an increasing function and let $0<\theta<\infty$ and $0<R<\infty$. The {\em Netrusov--Hausdorff cocontent} of a set $E\subset X$ is
\[
\cH^{\phi,\theta}_R(E)
=\inf\Bigg[\sum_{i:2^{-i}<R}\bigg(\sum_{j\in I_i} \frac{\mu(B(x_j,r_j))}{\phi(r_j)} \bigg)^{\theta}\Bigg]^{1/\theta},
\]
where the infimum is taken over all coverings $\{B(x_j,r_j)\}$ of $E$ with $0<r_j\le R$ and $I_i=\{j: 2^{-i}\le r_j<2^{-i+1}\}$.
When $R=\infty$, the infimum is taken over all coverings of $E$ and the first sum is over $i\in \z$.
When $\phi(t)=t^d$, we use the notation $\cH^{d,\theta}_R:=\cH^{\phi,\theta}_R$.
\end{defin}

Notice that if the measure $\mu$ is (Ahlfors) $Q$-regular, that is, there is a constant $C>1$, such that $$C^{-1}r^Q \leq \mu(B(x,r)) \leq Cr^Q$$ for every $x \in X$ and $0<r<\diam(X)$, then the cocontent $\cH^{d,\theta}_R$ is comparable (with two-sided inequalites) with the $(Q-d)$-dimensional Netrusov--Hausdorff content defined using the powers of radii.

A similar modification of the classical Hausdorff content is standard in the metric setting. The {\em Hausdorff content of codimension $d$}, $0<d<\infty$, is
\[
\cH^{d}_R(E)=\inf\bigg\{\sum_{j=1}^{\infty} \frac{\mu(B(x_j,r_j))}{r_j^d}\bigg\},
\]
where $0<R<\infty$, and the infimum is taken over all coverings $\{B(x_j,r_j)\}$ of $E$ satisfying $r_j\le R$ for all $j$.
When $R=\infty$, the infimum is taken over all coverings $\{B(x_j,r_j)\}$ of $E$. Naturally,
the {\em Hausdoff measure of codimension $d$} is defined as
\[
\cH^d(E)=\lim_{R\to0}\cH^d_R(E).
\] 

We use the following Leibniz type rule for fractional $s$-gradients, and its corollary, in the proofs of Theorem \ref{Cap < NH} and Theorem \ref{toinen suunta}.

\begin{lem}{\cite[Lemma 3.10 and Remark 3.11]{HIT}}\label{LemWithLipForTL}
Let $0<s<1$, $0<p<\infty$ and $0<q\le\infty$, and let $S\subset X$ be a measurable set.
Let $u\colon X\to\re$ be a measurable function with $(g_k)_{k \in \mathbb{Z}}\in \D^s(u)$ and
let $\ph$ be a bounded $L$-Lipschitz function supported in $S$.
Then sequences $(h_k)_{k\in\z}$ and $(\rho_k)_{k\in\z}$, where
\begin{align*}
&\rho_k=\big(g_k\Vert \ph\Vert_{\infty}+2^{k(s-1)}L|u|\big)\ch{\operatorname{supp}\ph} \quad\text{and} \\
&h_k=\big(g_k+2^{s k+2}|u|\big)\Vert \ph\Vert_{\infty}\ch{\operatorname{supp}\ph}
\end{align*}
are fractional $s$-gradients of $u\ph$.
Moreover, if $u\in N^s_{p,q}(S)$, then $u\ph\in N^s_{p,q}(X)$ and $\|u\ph\|_{N^s_{p,q}(X)}\le C\|u\|_{N^s_{p,q}(S)}$.
\end{lem}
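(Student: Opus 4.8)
The plan is to verify the pointwise inequality \eqref{frac grad} for the product $u\ph$ directly, and then to deduce the norm bound by combining the two gradients. I would start from the telescoping identity $u(x)\ph(x)-u(y)\ph(y)=\ph(x)(u(x)-u(y))+u(y)(\ph(x)-\ph(y))$. Fixing $k\in\z$ and points with $2^{-k-1}\le d(x,y)<2^{-k}$, the first summand is at once bounded by $\|\ph\|_\infty d(x,y)^s(g_k(x)+g_k(y))$ via $(g_k)\in\D^s(u)$. Since $u\ph$ vanishes off $\operatorname{supp}\ph$, I would split into cases according to membership in $\operatorname{supp}\ph$: if both $x,y$ lie outside, both sides vanish; if exactly one does, say $y$, then $\ph(y)=0$ and $u(x)\ph(x)=u(x)(\ph(x)-\ph(y))$ is handled exactly as the second telescoping term. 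This case analysis is precisely what justifies the factor $\ch{\operatorname{supp}\ph}$ in both $\rho_k$ and $h_k$.

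The two sequences arise from two different ways of estimating $u(y)(\ph(x)-\ph(y))$. For $(\rho_k)$ I would use the Lipschitz bound $|\ph(x)-\ph(y)|\le L\,d(x,y)$ and factor $d(x,y)=d(x,y)^s d(x,y)^{1-s}\le 2^{k(s-1)}d(x,y)^s$, using $d(x,y)<2^{-k}$ and $1-s>0$; this produces the summand $2^{k(s-1)}L|u(y)|$, which is absorbed into $d(x,y)^s(\rho_k(x)+\rho_k(y))$. For $(h_k)$ I would instead use the trivial bound $|\ph(x)-\ph(y)|\le 2\|\ph\|_\infty$ together with $d(x,y)\ge 2^{-k-1}$, so that $1\le 2^{(k+1)s}d(x,y)^s\le 2^{sk+2}d(x,y)^s$; this produces $2^{sk+2}\|\ph\|_\infty|u(y)|$, absorbed into $d(x,y)^s(h_k(x)+h_k(y))$. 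The symmetric decomposition (interchanging $x$ and $y$) supplies the missing $|u(x)|$ terms, and therefore each of $(\rho_k)$ and $(h_k)$ satisfies \eqref{frac grad} and is a fractional $s$-gradient of $u\ph$.

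For the last assertion I would first bound $\|u\ph\|_{L^p(X)}\le\|\ph\|_\infty\|u\|_{L^p(S)}$, using $\operatorname{supp}\ph\subset S$. The crucial observation is that \emph{neither} sequence alone has finite $l^q(L^p)$-norm: the weight $2^{k(s-1)}$ in $\rho_k$ diverges as $k\to-\infty$, while the weight $2^{sk}$ in $h_k$ diverges as $k\to+\infty$. I would therefore splice them index by index, setting $\tilde g_k=\rho_k$ for $k\ge 0$ and $\tilde g_k=h_k$ for $k<0$; since \eqref{frac grad} is a separate condition for each $k$, the spliced sequence $(\tilde g_k)$ is again a fractional $s$-gradient of $u\ph$. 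Using the elementary $(a+b)$-inequalities (cf. \eqref{elem ie} when $p\le1$ or $q/p\le1$) I would bound $\|\tilde g_k\|_{L^p(X)}^q$ by a constant times $\|g_k\|_{L^p(S)}^q$ plus a geometric weight ($2^{kq(s-1)}$ for $k\ge0$, $2^{skq}$ for $k<0$) times $\|u\|_{L^p(S)}^q$. Summing over $k$, the $g_k$-contribution is controlled by $\|(g_k)\|_{l^q(L^p(S))}^q$ and the two geometric tails converge precisely because $s-1<0$ on the $\rho_k$-range and $s>0$ on the $h_k$-range (the case $q=\infty$ being handled by taking suprema of the same bounded weights). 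Taking the infimum over $(g_k)\in\D^s(u)$ and combining with the $L^p$-estimate yields $\|u\ph\|_{N^s_{p,q}(X)}\le C\|u\|_{N^s_{p,q}(S)}$.

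The one genuinely essential step is the splicing: each of the two gradients is individually useless for the seminorm because its weight blows up at one end of the dyadic range, and the entire reason for producing both $(\rho_k)$ and $(h_k)$ is to patch them together across scales. Everything else—the telescoping, the support case analysis, and the conversions between powers of $d(x,y)$—is routine.
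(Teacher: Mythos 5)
Your proof is correct and is essentially the argument behind the cited result: the paper itself gives no proof, quoting \cite[Lemma 3.10 and Remark 3.11]{HIT}, and your telescoping decomposition, support case analysis, and the two conversions $d(x,y)\le 2^{-k}$ versus $d(x,y)\ge 2^{-k-1}$ reproduce exactly how the two gradients $(\rho_k)$ and $(h_k)$ arise there. You also correctly identify the point of Remark 3.11 in \cite{HIT}, namely that the norm bound requires splicing $\rho_k$ for $k\ge 0$ with $h_k$ for $k<0$, since each weight ($2^{k(s-1)}$, resp.\ $2^{sk}$) blows up at one end of the dyadic range and \eqref{frac grad} is a condition imposed separately for each $k$.
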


By choosing $u \equiv 1$ and $g_k \equiv 0$ for all $k \in \mathbb{Z}$ in (the proof of) the previous lemma, we obtain norm estimates for Lipschitz functions.

\begin{cor}\cite[Corollary 3.12]{HIT}\label{liplemma}
Let $0<s<1$, $0<p<\infty$ and $0<q\le\infty$.
Let  $\ph\colon X\to\re$ be an $L$-Lipschitz function supported in a bounded set $F\subset X$.
Then $\ph\in N^{s}_{p,q}(X)$ and
\begin{equation}\label{Msps norm u}
\|\ph\|_{N^{s}_{p,q}(X)}\le C(1+\|\ph\|_{\infty})(1+L^{s})\mu(F)^{1/p},
\end{equation}
where the constant $C>0$ depends only on $s$ and $q$.
\end{cor}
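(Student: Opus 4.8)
The plan is to invoke only the gradient part of Lemma \ref{LemWithLipForTL}, applied to the constant function $u\equiv 1$ with $g_k\equiv 0$ and $S=F$. Since $u$ is constant, \eqref{frac grad} holds trivially, so $(0)_{k\in\z}\in\D^s(1)$ and the hypotheses are met. Because $u\ph=\ph$, the lemma produces two explicit fractional $s$-gradients of $\ph$, namely
\[
\rho_k=2^{k(s-1)}L\,\ch{\operatorname{supp}\ph}
\qquad\text{and}\qquad
h_k=2^{sk+2}\|\ph\|_\infty\,\ch{\operatorname{supp}\ph}.
\]
I would deliberately \emph{not} use the ``moreover'' part of the lemma, whose constant silently absorbs the dependence on $L$ and $\|\ph\|_\infty$ that \eqref{Msps norm u} must display; instead the $l^q(L^p)$-norm is computed by hand from these two sequences.

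The key observation is that neither $(\rho_k)$ nor $(h_k)$ alone has finite $l^q(L^p)$-norm: as $k\to+\infty$ the factor $2^{k(s-1)}$ decays while $2^{sk}$ blows up, and as $k\to-\infty$ the roles reverse. Thus $\rho_k$ is the useful gradient at small scales (large $k$) and $h_k$ at large scales (small $k$). Since both are the \emph{same} constant multiple structure times $\ch{\operatorname{supp}\ph}$, the combined sequence
\[
\tilde g_k=\min\{\rho_k,h_k\}=\min\{2^{k(s-1)}L,\ 2^{sk+2}\|\ph\|_\infty\}\,\ch{\operatorname{supp}\ph}
\]
coincides, for each $k$, with one of $\rho_k$ or $h_k$ as a whole function. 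Hence $(\tilde g_k)_{k\in\z}\in\D^s(\ph)$, because at level $k$ inequality \eqref{frac grad} holds for whichever of the two is selected. This legitimacy of the gluing, resting on the shared support, is the one step that is not purely mechanical.

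For the estimate, $\|\tilde g_k\|_{L^p(X)}=\min\{2^{k(s-1)}L,\ 2^{sk+2}\|\ph\|_\infty\}\,\mu(\operatorname{supp}\ph)^{1/p}$ with $\mu(\operatorname{supp}\ph)\le\mu(F)$. The two terms cross over at $k_0\approx\log_2(L/\|\ph\|_\infty)$, so $\tilde g_k=h_k$ for $k\le k_0$ and $\tilde g_k=\rho_k$ for $k>k_0$. Splitting $\sum_k\|\tilde g_k\|_{L^p}^q$ at $k_0$ leaves two geometric series, one with ratio $2^{sq}$ (summable since $s>0$) and one with ratio $2^{(s-1)q}$ (summable since $s-1<0$), each comparable to $\big(\|\ph\|_\infty^{1-s}L^s\big)^q$; this is exactly where $0<s<1$ enters, forcing both tails to converge. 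The $q=\infty$ case is identical, the supremum being attained near $k_0$. I therefore obtain $\|\ph\|_{\dot N^s_{p,q}(X)}\le C(s,q)\,\|\ph\|_\infty^{1-s}L^s\,\mu(F)^{1/p}$. Adding the trivial bound $\|\ph\|_{L^p(X)}\le\|\ph\|_\infty\,\mu(F)^{1/p}$ and using the crude inequalities $\|\ph\|_\infty^{1-s}L^s\le(1+\|\ph\|_\infty)(1+L^s)$ and $\|\ph\|_\infty\le(1+\|\ph\|_\infty)(1+L^s)$ gives \eqref{Msps norm u}. The main obstacle is conceptual rather than computational: realizing that the two gradients from Lemma \ref{LemWithLipForTL} are each defective at one end of the scale and must be merged, after which the proof reduces to summing geometric series.
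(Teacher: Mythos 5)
Your proposal is correct and is exactly the argument the paper intends: it cites \cite[Corollary 3.12]{HIT} and points to choosing $u\equiv 1$, $g_k\equiv 0$ in Lemma \ref{LemWithLipForTL}, after which one merges the two resulting gradients $(\rho_k)$ and $(h_k)$ at the crossover scale $k_0\approx\log_2(L/\|\ph\|_\infty)$ and sums two geometric series, just as you do. Your key step --- that the levelwise selection $\tilde g_k=\min\{\rho_k,h_k\}$ remains a fractional $s$-gradient because the defining inequality \eqref{frac grad} is imposed separately for each $k$ (and here the pointwise minimum coincides with one of the two functions globally, sidestepping the fact that minima of gradients are not gradients in general) --- is sound and is precisely the content of the reference's proof.
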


In the next theorem, we show that the Besov capacity of a set $E \subset X$ is bounded from above by a constant times the Netrusov--Hausdorff cocontent of the set $E$.

\begin{thm}\label{Cap < NH}
Let $0<s<1$, $0<p<\infty$, $0<q\leq \infty$, $E \subset X$ and $R\le 1$. Then there is a constant $C>0$ such that
$$C_{p,q}^s(E)\le C\cH^{sp,\theta}_R(E),$$
where $\theta=\min\{1,q/p\}$.
\end{thm}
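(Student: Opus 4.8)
The plan is to glue together cutoffs of the balls in a near-optimal covering into a single admissible function, and then estimate its $N^s_{p,q}$-norm scale by scale. We may assume $\cH^{sp,\theta}_R(E)<\infty$, fix $\eps>0$, and pick a countable covering $\{B_j\}=\{B(x_j,r_j)\}$ of $E$ with $0<r_j\le R$ whose cocontent sum is within $\eps$ of $\cH^{sp,\theta}_R(E)$. Writing $a_i=\sum_{j\in I_i}\mu(B_j)/r_j^{sp}$ for the inner sums, the goal becomes $C^s_{p,q}(E)\le C(\sum_i a_i^\theta)^{1/\theta}$. For each $j$ I take the standard cutoff $\ph_j$ with $\ph_j=1$ on $B_j$, $\ph_j=0$ outside $2B_j$, $0\le\ph_j\le1$, and Lipschitz constant $C/r_j$, and set $u=\sup_j\ph_j$. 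Then $0\le u\le1$ and $u=1$ on the open set $\bigcup_jB_j\supset E$, so $u\in\cA(E)$ once finiteness of the norm is checked.

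For the gradient, the key point is that
\[
g_{j,k}=C\min\{r_j^{-1}2^{k(s-1)},\,2^{ks}\}\,\ch{2B_j}
\]
is a fractional $s$-gradient of $\ph_j$: the first term handles the Lipschitz estimate at small scales ($2^{-k}\lesssim r_j$) and the second the boundedness estimate at large scales, exactly as in (the proof of) Lemma \ref{LemWithLipForTL} with $u\equiv1$; since on $2B_j$ the two candidates are constant, at each fixed $k$ the minimum coincides with one valid choice. By Lemma \ref{sup lemma}, $(\sup_j g_{j,k})_k\in\D^s(u)$, and since enlarging a gradient preserves \eqref{frac grad}, the sequence $G_k=(\sum_j g_{j,k}^p)^{1/p}\ge\sup_j g_{j,k}$ also lies in $\D^s(u)$.

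The heart of the argument is the bound on $\|(G_k)\|_{l^q(L^p(X))}$. Using $\|G_k\|_{L^p(X)}^p=\sum_j\|g_{j,k}\|_{L^p(X)}^p$, the doubling property $\mu(2B_j)\le c_d\mu(B_j)$, and regrouping the balls by the dyadic scale $i$ (so $r_j\sim 2^{-i}$ and $\sum_{j\in I_i}\mu(B_j)\le C2^{-isp}a_i$), a direct computation gives
\[
\|G_k\|_{L^p(X)}^p\le C\sum_i a_i\,\min\{2^{(i-k)p(1-s)},\,2^{-(i-k)sp}\}\le C\sum_i a_i\,2^{-\beta|i-k|},
\]
where $\beta=\min\{sp,p(1-s)\}>0$ (here $0<s<1$ is used). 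Setting $c_k=\|G_k\|_{L^p(X)}^p$ and applying Lemma \ref{summing lemma} with base $2^\beta>1$ and exponent $q/p$ yields $\sum_k c_k^{q/p}\le C\sum_i a_i^{q/p}$, hence $\|(G_k)\|_{l^q(L^p(X))}^p=(\sum_k c_k^{q/p})^{p/q}\le C(\sum_i a_i^{q/p})^{p/q}$ (for $q=\infty$ the bound $\sup_k c_k\le C\sum_i a_i$ is immediate). The two regimes now assemble into $\theta=\min\{1,q/p\}$: when $q<p$ the right-hand side is already $C(\sum_i a_i^\theta)^{1/\theta}$, while when $q\ge p$ the embedding $\|(a_i)\|_{l^{q/p}}\le\|(a_i)\|_{l^1}$ bounds it by $C\sum_i a_i=C(\sum_i a_i^\theta)^{1/\theta}$.

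It remains to control the $L^p$-part. Since $0\le u\le1$ is supported in $\bigcup_j2B_j$ and $r_j\le R\le1$ forces $\mu(B_j)\le\mu(B_j)/r_j^{sp}$, I get $\|u\|_{L^p(X)}^p\le\sum_j\mu(2B_j)\le C\sum_i a_i\le C(\sum_i a_i^\theta)^{1/\theta}$, the last step using $\|\cdot\|_{l^1}\le\|\cdot\|_{l^\theta}$ when $\theta\le1$. Combining with $(\|u\|_{L^p(X)}+\|(G_k)\|_{l^q(L^p(X))})^p\le C(\|u\|_{L^p(X)}^p+\|(G_k)\|_{l^q(L^p(X))}^p)$ gives $C^s_{p,q}(E)\le\|u\|_{N^s_{p,q}(X)}^p\le C(\sum_i a_i^\theta)^{1/\theta}\le C(\cH^{sp,\theta}_R(E)+\eps)$, and letting $\eps\to0$ finishes the proof. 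I expect the main obstacle to be the scale bookkeeping in the gradient estimate, namely extracting the exponential decay $2^{-\beta|i-k|}$ after regrouping by radius and feeding it correctly into Lemma \ref{summing lemma}, together with verifying that the regimes $q<p$ and $q\ge p$ collapse to the single exponent $\theta$.
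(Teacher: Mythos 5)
Your argument is correct, but it follows a genuinely different route from the paper's. The paper works scale by scale: for each $i$ it takes the single Lipschitz cutoff $u_i=\max\{0,1-2^i d(\cdot,\cup_{j\in I_i}B(x_j,r_j))\}$, bounds $C^s_{p,q}(\cup_{j\in I_i}B(x_j,r_j))\le\|u_i\|_{N^s_{p,q}(X)}^p\le C\sum_{j\in I_i}\mu(B(x_j,r_j))r_j^{-sp}$ via Corollary \ref{liplemma}, and then assembles the scales by the $\theta$-subadditivity of the capacity, Theorem \ref{r-subadd} with $r=\theta=\min\{1,q/p\}$. You instead glue everything into one admissible function $u=\sup_j\ph_j$ and estimate its norm directly. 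Your scale-resolved gradient $C\min\{r_j^{-1}2^{k(s-1)},2^{ks}\}\ch{2B_j}$ is exactly the pair $(\rho_k)$, $(h_k)$ of Lemma \ref{LemWithLipForTL} with $u\equiv 1$, combined scale by scale --- legitimate, as you note, because condition \eqref{frac grad} decouples in $k$ and at each fixed $k$ the pointwise minimum coincides with one of the two valid gradients --- and the resulting decay $2^{-\beta|i-k|}$ with $\beta=\min\{sp,p(1-s)\}>0$ is correctly fed into Lemma \ref{summing lemma}. In effect you keep the cross-scale information that Corollary \ref{liplemma} integrates out into the factor $(1+L^s)\mu(F)^{1/p}$, and you re-derive inline the mechanism behind Theorem \ref{r-subadd}: your two-regime analysis, where $\big(\sum_i a_i^{q/p}\big)^{p/q}$ is already the content sum when $q\le p$ and is dominated by $\sum_i a_i\le\big(\sum_i a_i^\theta\big)^{1/\theta}$ via \eqref{elem ie} when $q\ge p$, is precisely where $\theta=\min\{1,q/p\}$ enters the subadditivity proof. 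The paper's route buys brevity and modularity; yours buys self-containedness (no appeal to Theorem \ref{r-subadd}) and an explicit admissible function with an explicit gradient, at the cost of heavier $|i-k|$-bookkeeping. The remaining steps all check out: the use of Lemma \ref{sup lemma} (with $u\in L^0(X)$ since $0\le u\le 1$), the pointwise enlargement from $\sup_j g_{j,k}$ to $G_k=\big(\sum_j g_{j,k}^p\big)^{1/p}$, the $L^p$-term using $r_j\le R\le 1$, and the $q=\infty$ case. One shared cosmetic point: Definition \ref{HN content} sums only over $i$ with $2^{-i}<R$, so when $R$ is a dyadic power, balls of radius exactly $R$ sit at an index excluded from the sum; your proof, like the paper's own, silently counts them, which is harmless up to shrinking the radii slightly.
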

\begin{proof}
Let $\{B(x_j,r_j)\}$ be a covering of the set $E$ such that $r_j\le 1$ for all $j$. Let $i \in \mathbb{Z}_{+} \cup\{0\}$ and
\[
u_i(x)=\max\{0,1-2^id(x,\cup_{j\in I_i}B(x_j,r_j))\},
\] 
where $I_i=\{j: 2^{-i}\le r_j<2^{-i+1}\}$. Then $u_i=1$ in $\cup_{j\in I_i}B(x_j,r_j)$, $u_i=0$ outside $\cup_{j\in I_i}B(x_j,2^{-i+2})$ and $u_i$ is Lipschitz with constant $2^i$. Since $i \geq 0$, we have that $1+2^{is} \leq C2^{is}$ and it follows from Corollary \ref{liplemma} and the doubling property that
\[
\begin{split}
C_{p,q}^s(\cup_{j\in I_i}B(x_j,r_j))&\le \|u_i\|_{N^s_{p,q}(X)}^p \\
&\le C \, (1+ ||u_i||_{\infty})^p (1+2^{is})^p \mu(\cup_{j\in I_i}B(x_j,2^{-i+2})) \\
&\le C \, 2^{isp}\mu(\cup_{j\in I_i}B(x_j,2^{-i+2})) \\
&\le C\sum_{j\in I_i}\frac{\mu(B(x_j,r_j))}{r_j^{sp}}.
\end{split}
\]
Let $\theta=\min\{1,q/p\}$. By Theorem \ref{r-subadd}, we have that
\begin{align*}
C_{p,q}^s(E)&\le C \Big(\sum_{i} C_{p,q}^s(\cup_{j\in I_i}B(x_j,r_j))^{\theta}\Big)^{1/\theta} \\
&\le C\Big(\sum_{i} \Big(\sum_{j\in I_i}\frac{\mu(B(x_j,r_j))}{r_j^{sp}} \Big)^{\theta}\Big)^{1/\theta}
\end{align*}
and the claim follows by taking the infimum over all covers $\{B(x_j,r_j)\}$ of the set $E$. 
\end{proof}

Next, we prove a converse estimate which gives a lower bound estimate for the capacity in terms of the Netrusov--Hausdorff cocontent. 

\begin{thm}\label{toinen suunta}
Let $0<s<1$, $0<p<\infty$, $0<q\leq \infty$ and let $\phi$: $(0,\infty) \rightarrow (0,\infty)$ be an increasing function, such that 
\[
\int_0^a\phi(t)^{-1/p}t^{s-1}\,dt<\infty
\]
for every $0<a<\infty$. Let $x_0 \in X$, $0<R<\infty$ and assume that $B(x_0,8R)\setminus B(x_0,4R)$ is nonempty. Then there are constants $C>0$ and $c>0$ such that
\[
\cH^{\phi, q/p}_{cR}(E)\le C C_{p,q}^s(E)
\]
for every compact set $E\subset B(x_0,R)$.
\end{thm}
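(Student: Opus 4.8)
The plan is to establish the estimate dual to Theorem \ref{Cap < NH} by a telescoping--median argument in the spirit of Netrusov. Since $E$ is compact, Theorem \ref{dc2} lets me work with a locally Lipschitz admissible function $u\in\tilde\cA(E)$ satisfying $\|u\|_{N^s_{p,q}(X)}^p\le C\,C_{p,q}^s(E)+\eps$; being continuous, every $x\in E$ is a generalized Lebesgue point of $u$ by part h) of Lemma \ref{median lemma}, and $u(x)\ge1$ there. Fixing a coarse scale $2^{-k_0}$ comparable to $R$, I telescope
\[
u(x)-m_u^\gamma(B(x,2^{-k_0}))=\sum_{k\ge k_0}\big(m_u^\gamma(B(x,2^{-k-1}))-m_u^\gamma(B(x,2^{-k}))\big),
\]
so that controlling the consecutive differences and the coarsest term will force the gradient sum on the right to be bounded below uniformly on $E$.

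Next I would bound each difference by a median Sobolev--Poincar\'e quantity. Writing $B_k=B(x,2^{-k})$ and combining parts d) and f) of Lemma \ref{median lemma} with the nesting estimate c) (with the doubling constant relating $\mu(B_k)$ and $\mu(B_{k+1})$) gives, for every $c$, the bound $|m_u^\gamma(B_{k+1})-m_u^\gamma(B_k)|\le 2\,m_{|u-c|}^{\gamma'}(B_k)$ with $\gamma'=\gamma/C$, hence $|m_u^\gamma(B_{k+1})-m_u^\gamma(B_k)|\le 2\inf_c m_{|u-c|}^{\gamma'}(B_k)$. Feeding this into Theorem \ref{mm} (applied with median level $\gamma'$) produces the good gradient $(g_k)_{k\in\z}\in\D^s(u)$ of \eqref{good grad}, which satisfies $\|(g_k)\|_{l^q(L^p(X))}\le c\|(h_j)\|_{l^q(L^p(X))}$ for any $(h_j)\in\D^s(u)$, together with
\[
|m_u^\gamma(B_{k+1})-m_u^\gamma(B_k)|\le C2^{-ks}\Big(\vint{B(x,2^{-k+1})}g_k^p\,d\mu\Big)^{1/p}.
\]
For the coarsest term, part g) of Lemma \ref{median lemma} gives $m_u^\gamma(B(x,2^{-k_0}))\le(\gamma^{-1}\|u\|_{L^p(X)}^p/\mu(B(x,2^{-k_0})))^{1/p}$; since $x\in B(x_0,R)$ we have $\mu(B(x,2^{-k_0}))\ge\mu(B(x_0,R))>0$, and the nonempty annulus $B(x_0,8R)\setminus B(x_0,4R)$ supplies the reverse-doubling room at scale $R$ that keeps this ratio under control. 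After discarding the trivial case where $C_{p,q}^s(E)$ exceeds a fixed threshold (there $\cH^{\phi,q/p}_{cR}(E)\le\mu(B(x_0,R))/\phi(R)\le C\,C_{p,q}^s(E)$ by covering $E$ with the single ball $B(x_0,R)$), I may assume $\|u\|_{L^p(X)}$ so small that the coarsest median is $\le\frac{1}{2}$, whence $\frac{1}{2}\le C\sum_{k\ge k_0}2^{-ks}(\vint{B(x,2^{-k+1})}g_k^p\,d\mu)^{1/p}$ for every $x\in E$.

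Now comes the Netrusov selection, which is where the integrability hypothesis on $\phi$ enters. Setting $\lambda_k=2^{-ks}\phi(2^{-k})^{-1/p}$, the assumption $\int_0^a\phi(t)^{-1/p}t^{s-1}\,dt<\infty$ is equivalent to $\Lambda:=\sum_{k\ge k_0}\lambda_k<\infty$. Writing $2^{-ks}=\lambda_k\phi(2^{-k})^{1/p}$ in the lower bound above, a pigeonhole over the convergent weights $\lambda_k$ yields, for each $x\in E$, a scale $k(x)\ge k_0$ with $\phi(2^{-k(x)})\vint{B(x,r_x)}g_{k(x)}^p\,d\mu\ge c$, where $r_x=2^{-k(x)+1}$; since $\phi$ is increasing this rearranges to $\mu(B(x,r_x))/\phi(r_x)\le C\int_{B(x,r_x)}g_{k(x)}^p\,d\mu$. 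The balls $\{B(x,r_x)\}_{x\in E}$ cover $E$ and have radii $\le cR$.

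Finally I would organize these balls by scale and sum. For each fixed $k$, a $5r$-covering lemma extracts a pairwise disjoint subfamily of the equal-radius balls $\{B(x,r_x):k(x)=k\}$ whose fivefold dilations still cover $\{x\in E:k(x)=k\}$; disjointness and the doubling property then give $\sum_{\text{selected}}\mu(B)/\phi(r)\le C\int_X g_k^p\,d\mu=C\|g_k\|_{L^p(X)}^p$. Summing the dilated balls (one cocontent index $I_i$ per $k$, up to a fixed shift) in the $l^{q/p}$ norm gives
\[
\cH^{\phi,q/p}_{cR}(E)\le C\Big(\sum_{k\in\z}\|g_k\|_{L^p(X)}^q\Big)^{p/q}=C\|(g_k)\|_{l^q(L^p(X))}^p\le C\|u\|_{N^s_{p,q}(X)}^p\le C\,C_{p,q}^s(E)+C\eps,
\]
and letting $\eps\to0$ finishes the proof. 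I expect the main obstacle to be precisely this last bookkeeping --- matching the stopping scales $k(x)$ to the dyadic index sets $I_i$ of the cocontent and pushing the $l^{q/p}$ summation through the disjointified covering --- together with the uniform control of the coarsest-scale median, for which the nonempty-annulus hypothesis is essential.
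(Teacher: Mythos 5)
Your proposal is correct, and it shares the paper's skeleton: reduction to a locally Lipschitz admissible function via Theorem \ref{dc2}, the telescoping median estimate through Theorem \ref{mm} and Remark \ref{mm2} (your derivation of $|m_u^\gamma(B_{k+1})-m_u^\gamma(B_k)|\le 2\inf_c m^{\gamma/C}_{|u-c|}(B_k)$ is the same computation), the pigeonhole over scales using $\sum_{k\ge k_0} 2^{-ks}\phi(2^{-k})^{-1/p}\le C\int_0^{cR}\phi(t)^{-1/p}t^{s-1}\,dt<\infty$, and the $5r$-covering with $l^{q/p}$ bookkeeping (your per-scale disjointification versus the paper's global one is immaterial). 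The genuine divergence is the coarse-scale term $|m^\gamma_u(B(x,2^{-k_0}))|$. The paper multiplies the admissible function by a Lipschitz cutoff $\psi$ and invokes Lemma \ref{LemWithLipForTL} together with \eqref{good grad} so that the good gradient vanishes outside $B(x_0,2R)$; the nonempty annulus then furnishes a point $z\in B(x_0,8R)\setminus B(x_0,4R)$ with $u(z)=g(z)=0$ at distance comparable to $R$ from $B(x,2^{-m})$, and properties f), b), e), g) of Lemma \ref{median lemma} convert the coarse median into the same gradient averages as the telescoping part. You instead bound the coarse median by part g) of Lemma \ref{median lemma} and run a threshold dichotomy: if $C^s_{p,q}(E)\ge\delta_0$, the single ball $B(x_0,R)$ covers $E$ and gives $\cH^{\phi,q/p}_{cR}(E)\le\mu(B(x_0,R))/\phi(R)\le C\,C^s_{p,q}(E)$; otherwise $\|u\|_{L^p(X)}^p\le C\delta_0+\eps$ can be made small enough, relative to $\gamma$ and $\mu(B(x_0,R))\le\mu(B(x,2^{-k_0}))$, that the coarse median is at most $\tfrac12$. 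This is legitimate, because the theorem fixes $x_0$ and $R$ before asserting the constants, so dependence on $\mu(B(x_0,R))$ and $\phi(R)$ is permitted; it also lets you skip the cutoff lemma entirely. One correction: your claim that the nonempty-annulus hypothesis is essential to your argument is wrong --- you never actually use it, and no ``reverse doubling'' is involved, since $B(x_0,R)\subset B(x,2^{-k_0})$ already gives $\mu(B(x,2^{-k_0}))\ge\mu(B(x_0,R))>0$. Written out, your dichotomy proves the theorem without the annulus assumption; what the annulus buys the paper is precisely the ability to control the coarse median by gradients alone, keeping the constants free of the ratio $\mu(B(x_0,R))/\phi(R)$ that your threshold argument introduces.
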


\begin{rem}\label{neteuc}
For example, when $\phi(t)=t^d$, we have that $$\int_0^a t^{-d/p+s-1}\,dt<\infty,$$ if and only if $1-s+d/p < 1$. That is, $d<sp$. 
\end{rem}

\begin{proof} To avoid some inessential technical difficulties and make the notation more simple, we assume that $R=2^{-m}$, for $m \in \mathbb{Z}$. In the light of our proof, we can see that the result for $0<R<\infty$ can be obtained using the same argument. 

Let $\eps>0$ and $E \subset B(x_0,2^{-m})$ be a compact set. 
By Theorem \ref{dc2}, there is a locally Lipschitz function $v \in N^s_{p,q}(X)$, such that $v \geq 1$ on a neighbourhood of $E$ and 
$$\|v\|_{N_{p,q}^s(X)}^p < C C_{p,q}^s(E)+\eps.$$
Let $\psi$ be a Lipschitz function, such that $\psi =1$ on $B(x_0,2^{-m})$ and $\psi= 0$ outside $B(x_0,2^{-m+1})$. Then, $u=v\psi \in N_{p,q}^s(X)$ is Lipschitz continuous and $u\geq1$ on a neighbourhood of $E$. By Lemma \ref{LemWithLipForTL}, there exists $(g_k)_{k \in \mathbb{Z}}\in\D^s(u)$, such that $g_k=0$ outside $B(x_0,2^{-m+1})$, for every $k$, and
\begin{equation}\label{grad arvio2}
\|(g_k)\|_{l^q(L^p(X))}^p \leq C \|v\|_{N_{p,q}^s(X)}^p< C (C_{p,q}^s(E)+\eps).
\end{equation}
To be precise, we have here the fractional $s$-gradient of $u$, still denoted by $g_k$, which satisfies the Sobolev--Poincar\'e type inequality \eqref{joo} that is later used in the proof. By formula \eqref{good grad}, $g_k$ is supported in $B(x_0,2^{-m+1})$, for every $k$, and by \eqref{jep2} the inequality \eqref{grad arvio2} is satisfied.

Let $x\in E$ be a generalized Lebesgue point of $u$ (see Definition \ref{gLp}). Since $u$ is continuous, it follows from h) of Lemma \ref{median lemma} that every point in $E$ is such a point. Then,
\begin{equation}\label{jotain}
1\le u(x)\le |u(x)-m^\gamma_u(B(x,2^{-m}))|+|m^\gamma_u(B(x,2^{-m}))|.
\end{equation}
We can estimate the first term by Lemma \ref{median lemma}, properties d), f) and c) of $\gamma$-median, and by a telescoping argument 
\begin{align*}
|u(x)-m_{u}^{\gamma} (B(x,2^{-m}))| &\leq \sum_{k \geq m} |m_{u}^{\gamma} (B(x,2^{-k-1})) - m_{u}^{\gamma} (B(x,2^{-k}))| \\
&\leq \sum_{k \geq m} m_{|u-m_{u}^{\gamma} (B(x,2^{-k}))|}^{\gamma} (B(x,2^{-k-1})) \\
&\leq \sum_{k \geq m} m_{|u-m_{u}^{\gamma} (B(x,2^{-k}))|}^{\gamma/C} (B(x,2^{-k})).
\end{align*}
Then, it follows from Theorem \ref{mm} and Remark \ref{mm2} that
$$\sum_{k \geq m} m_{|u-m_{u}^{\gamma} (B(x,2^{-k}))|}^{\gamma/C} (B(x,2^{-k})) \leq C \sum_{k \geq m} 2^{-ks} \Big( \vint{B(x,2^{-k+1})} g_k^{p} \, d\mu \Big)^{1/p}.$$

Next, we estimate the second term of \eqref{jotain}.
Let $y\in B(x,2^{-m})\setminus F$, where $F$ is the exceptional set from the Definition \ref{fract grad}. Since $B(x,2^{-m})\subset B(x_0,2^{-m+1})$ and $B(x_0, 2^{-m+3})\setminus B(x_0, 2^{-m+2})$ is nonempty, there exists $z\in (B(x_0,2^{-m+3})\setminus B(x_0,2^{-m+2}))\setminus F$ such that $2^{-m}\le d(y,z)\ < 2^{-m+4}$. 
We define $g=\max\{g_k: m-4 \le k\le m-1\}$. Now,
\begin{align*}
|u(y)|&=|u(y)-u(z)|\le d(y,z)^s(g(y)+g(z)) \\
&=d(y,z)^s g(y) \le 2^{(-m+4)s} g(y)
\end{align*}
and by f), b), e) and g) of Lemma \ref{median lemma}, we have that
\begin{align*}
|m^\gamma_u(B(x,2^{-m}))| &\le m^{\gamma}_{2^{(-m+4)s} g}(B(x,2^{-m})) \\
&\le C \, 2^{-ms} \, m^\gamma_g(B(x,2^{-m})) \\
&\le C \sum_{k=m-4}^{m-1} 2^{-ks} \Big(\vint{B(x,2^{-m})}g_k^p\,d\mu\Big)^{1/p} \\
&\le C \sum_{k \geq m}2^{-ks}\Big(\vint{B(x,2^{-k+1})}g_k^p\,d\mu\Big)^{1/p}.
\end{align*}
Hence,
\[
\begin{split}
1&\le C \sum_{k \geq m}2^{-ks}\Big(\vint{B(x,2^{-k+1})}g_k^p\,d\mu\Big)^{1/p} \\
&\le C \, \Big(\sum_{k \geq m}\phi(2^{-k+1})^{-1/p} \, 2^{-ks}\Big)\sup_{k \geq m}\phi(2^{-k+1})^{1/p}\Big(\vint{B(x,2^{-k+1})}g_k^p\,d\mu\Big)^{1/p} \\
&\le C \, \Big(\int_0^{2^{-m+1}}\phi(t)^{-1/p} \, t^{s-1}\,dt \Big)\sup_{k \geq m}\phi(2^{-k+1})^{1/p}\Big(\vint{B(x,2^{-k+1})}g_k^p\,d\mu\Big)^{1/p}\\
&\le C\sup_{k \geq m}\phi(2^{-k+1})^{1/p}\Big(\vint{B(x,2^{-k+1})}g_k^p\,d\mu\Big)^{1/p}.
\end{split}
\]
Now, for every $x\in E$, there is a ball $B(x,2^{-k_x+1})$, such that
\[
\frac{\mu(B(x,2^{-k_x+1}))}{\phi(2^{-k_x+1})}\le C\int_{B(x,2^{-k_x+1})}g_{k_x}^p\,d\mu.
\]
By the $5r$-covering lemma, there exists a countable family of disjoint balls $B_j=B(x_j, 2^{-k_{x_j}+1})$, of radii $r_j=2^{-k_{x_j}+1} \leq 2^{-m}$, such that the dilated balls $5B_j$ cover the set $E$. We use notation $j \in I_i$, when $2^{-i}\leq 5r_j < 2^{-i+1}$. Then $k_{x_j}=i+3$, for $j \in I_i$, and since $\phi$ is increasing
$$\sum_{j \in I_i} \frac{\mu(5B_j)}{\phi(5r_j)} \leq C \sum_{j \in I_i} \frac{\mu(B_j)}{\phi(r_j)} \leq C \sum_{j \in I_i} \int_{B_j}g_{i+3}^p\,d\mu \leq C \, \|g_{i+3}\|_{L^p(X)}^p,$$
where we also used doubling and the disjointness of the balls $B_j$. Summing over $i$, we obtain 
$$\sum_{2^{-i}<5\cdot 2^{-m}} \Big( \sum_{j \in I_i} \frac{\mu(5B_j)}{\phi(5r_j)} \Big)^{q/p} \leq C \sum_{i \in \mathbb{Z}} \|g_{i+3}\|_{L^p(X)}^q$$
and it follows that
$$\cH^{\phi, q/p}_{5\cdot 2^{-m}}(E)\leq C \Big( \sum_{i \in \mathbb{Z}} \|g_{i+3}\|_{L^p(X)}^q \Big)^{p/q}.$$ Now, letting $\epsilon \rightarrow 0$ in \eqref{grad arvio2} proves the claim.
\end{proof}


\subsection*{Acknowledgements}
I am grateful to Toni Heikkinen for helpful comments and ideas during the preparation of this work. This research was supported by the Academy of Finland (grant no. 272886).

\end{document}